\documentclass[a4paper,8pt]{article}

\usepackage{a4wide}
\usepackage{authblk}
\usepackage{cite}
\usepackage[british]{babel}
\usepackage[utf8]{inputenc}
\usepackage{xcolor}
\usepackage{graphicx,subfigure}
\usepackage{epstopdf}
    \graphicspath{{./}{figures/}}
\usepackage[colorlinks=true,linkcolor=blue,citecolor=blue]{hyperref}
\usepackage{xcolor}
\usepackage{amsfonts,amsmath,amsthm}

\newtheorem{theorem}{Theorem}[section]

\newtheorem{lemma}[theorem]{Lemma}

\theoremstyle{remark}\newtheorem{remark}[theorem]{Remark}
%%%%%%%%%%%%%%%%%%%%%%%%%%%%%%%%%%
\newenvironment{equations}{\equation\aligned}{\endaligned\endequation}
%%%%%%%%%%%%%%%%%%%%%%%%%%%%%%%%%%%

\newcommand{\be}{\begin{equation}}
\newcommand{\ee}{\end{equation}}

\newcommand{\R}{\mathbb{R}}

%%%%%%%%%%%%%%%%%%%%%%%%%%%%%
\newcommand{\fer}[1]{(\ref{#1})}
%%%%%%%%%%%%%%%%%%%%%%%%%%
\allowdisplaybreaks

\begin{document}
\title{{Supercritical Fokker-Planck equations for consensus dynamics: large-time behaviour and weighted Nash-type  inequalities}}
%title{Condensation effects in kinetic models for consensus dynamics: finite-time blow-up and regularity aspects}
%\title{Superlinear drifts trigger finite-time blow-up of kinetic models for consensus dynamics}

\author[1,2]{Giuseppe Toscani \thanks{\texttt{giuseppe.toscani@unipv.it}}}
\author[1]{Mattia Zanella \thanks{\texttt{mattia.zanella@unipv.it}}}

\affil[1]{Department of Mathematics ``F. Casorati'', University of Pavia, Italy} 
\affil[2]{Institute of Applied Mathematics and Information Technologies, National Research Council (CNR), Pavia, Italy}

\date{\today}

\maketitle
\abstract{We study the main properties of the solution of a Fokker-Planck equation characterized by a variable diffusion coefficient and a polynomial superlinear drift, modeling the formation of consensus in a large interacting system of individuals.  The Fokker-Planck equation is derived from the kinetic description of the dynamics of a quantum particle system, and in presence of a high nonlinearity in the drift operator, mimicking the effects of the mass in the alignment forces,  allows for steady states similar to a Bose-Einstein condensate. {The main feature of this Fokker–Planck equation is the presence of a variable diffusion coefficient, a nonlinear drift and boundaries, which introduce new challenging mathematical problems in the study of its long-time behavior. In particular, propagation of regularity is shown as a consequence of new weighted Nash and Gagliardo--Nirenberg inequalities.}

%The analysis shows that the regularity of the solution is strongly linked to the degree of nonlinearity in the drift, and that finite-time blow-up of the solution can occur when the degree of nonlinearity is sufficiently high. However, the presence of diffusion prevents the solution from forming condensation after the blow-up time.

\section{Introduction}

In the last decades, several models have been proposed to describe the emergence of consensus in large interacting particle systems \cite{DG,HK}. Typically, artificial  consensus-type phenomena are determined by the balance between random dynamics and compromise forces, depending on the distance between the states of two interacting particles. When the number of particles become infinite, it is possible to derive kinetic and mean-field models that allow to study the evolution of the density function expressing the fraction of particles/agents  around an emerging state \cite{APTZ,BHW,D_etal,MT,PTTZ,T}. Applications of such concepts found relevant applications in a heterogeneity of fields, see e.g. \cite{CFRT,CS,DFWZ}. Among them, kinetic equations have been derived for large particle systems which, in the zero-diffusion limit, provide exponential in time collapse of the support of the density function, i.e. blow-up of the solution.   

In this work, we concentrate on the evolution of the density function $f(w,t)$ solution of the one-dimensional nonlinear Fokker-Planck-type equations for consensus phenomena studied in \cite{CDTZ}
\begin{equation}
\label{eq:FP}
\dfrac{\partial}{\partial t} f(w,t) = \dfrac{\partial}{\partial w} \left[ wf(w,t)(1+\beta H^\alpha(w) f^\alpha(w,t)) + \lambda \dfrac{\partial}{\partial w} (H(w)f(w,t))\right], \qquad w \in [-1,1]
\end{equation}
with $\alpha,\beta\ge0$, $H(w) = 1-w^2$ and $f(w,0) = f_0(w) \in L^1(-1,1) \cap L^\infty(-1,1)$. The above equation is complemented by no-flux boundary conditions. In suitable regimes, the partial differential equation \eqref{eq:FP} can be formally derived from a nonlinear Boltzmann-type model, in which the interaction frequency is weighted by the kinetic density, see \cite{CDTZ}. In detail, eq. \eqref{eq:FP} is intended to describe the impact of population's size on the evolution of a system of interacting particles. The large time distribution of \eqref{eq:FP}  reads
\begin{equation} \label{steady}
f_\infty(w) = C\dfrac{(1+w)^{\frac{1}{2\lambda}}(1-w)^{\frac{1}{2\lambda}}}{(1-\beta C^\alpha(1+w)^{\frac{\alpha}{2\lambda}}(1-w)^{\frac{\alpha}{2\lambda}} )^{\frac{1}{\alpha}}},\qquad C = \beta^{-1/\alpha}.
\end{equation}
The steady state in \fer{steady} is such that as $\alpha>2$ a critical mass $\mu_c>0$ exists, see \cite{CDTZ}, and when the critical mass is exceeded, condensation of the exceeding mass around the point $w=0$ appears. 
%These effects are also known as group conformity phenomena in the opinion consensus literature, see e.g. \cite{CY} and the references therein. 
{We recall that similar phenomena have been studied for the dynamics of quantum indistinguishable particles relaxing towards the classical Bose--Einstein distribution through the so-called Kaniadadakis-Quarati equation \cite{KQ}, see also related works \cite{BAGT,CRS,T2}. In this direction we mention the recent results in  \cite{Hopf} related to a Fokker--Planck equation generalizing the drift term in Kaniadakis--Quarati equation, and the works on one-dimensional advection-diffusion equations with superlinear drift \cite{BL,guidolin}, where it is considered the case with constant diffusion.}

When $\alpha =0$, the Fokker--Planck equation \fer{eq:FP} reduces to the classical equation for opinion formation introduced in \cite{T}, characterized by a linear drift. In this case, the linearity of the equation allows for a variety of powerful mathematical methods which guarantee both the regularity of the solution, and its exponential convergence towards the equilibrium density in relative entropy \cite{FPTT2}.  Anyway, when $\alpha >0$, these methods are no more applicable, and new technical results need to be considered. Furthermore, the methods previously applied to Fokker-Planck type equations linked to a steady state in the form of the Bose-Einstein distribution, which are characterized or by a linear diffusion \cite{CRS,BAGT,T2}, or by a linear drift \cite{FLST}, have to be reconsidered due to the presence of the variable diffusion coefficient $H(w)$. We highlight that the regularity results related to Fokker-Planck type equations with linear diffusion and a general nonlinear drift recently considered in \cite{BL,guidolin,Hopf} can not be immediately extended to the present situation, due to the presence of the variable coefficient of diffusion. We further remark  that the regularity of the solution of equation \fer{eq:FP} is closely related to the value of the parameter $\alpha\ge0$, as  it is easily understandable by looking at its equilibrium density, which, as shown in \cite{CDTZ} presents a condensate when the total density of individuals exceeds a fixed critical level and $\alpha > 2$.

Looking at the existing literature, the analysis of the linear case in \cite{FPTT2} shows that in the case of the variable diffusion coefficient $H(w)$ one can still prove a logarithmic Sobolev inequality, with the correct weight, in the form
\[
\int_{-1}^1 f^2(w)\log f^2(w)\, dw + \log 2 \le 2\int_{-1}^1 H(w)[f'(w)]^2\, dw.
\]
This result, coupled to the general Poincar\'e inequality with weight proven in \cite{FPTT} and subsequently generalized in \cite{FPTT1,ToR}, given in our case by
\[
\int_{-1}^1 \left[f(w)-\frac 12 \int_{-1}^1f(v)\,dv) \right]^2 \, dw  \le \frac 12\int_{-1}^1 H(w)[f'(w)]^2\, dw,
\]
suggests that, in the presence of a uniform density on the interval $[-1,1]$, one can  obtain a weighted version of Nash and Gagliardo-Nirenberg inequalities, which allows, like in the case of a constant coefficient of diffusion, to a precise study of the regularity of the solution to \fer{eq:FP}. \\
 In what follows, we show that this is indeed the case, so that the regularity of the solution to \eqref{eq:FP} for any $\alpha \in [0,2]$ follows by a suitable extension of both Nash and Gagliardo-Nirenberg inequalities in presence of weight. These new results allow a detailed study of the regularity of the solutions as well as the conditions for its blow-up in finite time. { Even if these new inequalities constitute the main novelty of the paper, to maintain an easy-to-read presentation, we will postpone a detailed proof, which include a number of  technical details,  into an  Appendix.} \\
 In presence of a superlinear drift characterized by $\alpha>2$, where the regularity of the problem is  lost in a classical $L^2$ space,  we introduce a class of weighted spaces $L^q_p((-1,1))$ where regularity is proven for a sufficiently large diffusion coefficient for $q=2$. Hence, since the solution remains bounded in the new weighted space,  formation of Dirac masses is not in general allowed  for any finite fixed $\alpha>2$. 

In more detail, the manuscript is organized as follows: in Section \ref{sec:regularity} we study the regularity of the solution for $\alpha\le 2$.  Further, the loss of $L^2$ regularity when $\alpha >2$ is proven in Section \ref{sect:2} by showing that a finite-time blow-up of the solution is possible  in presence of a sufficiently small initial energy or a sufficiently high initial mass. Last,  Section \ref{sect:4} is concerned with a class of spaces where the regularity is preserved also for $\alpha>2$. The technical proof of the new Nash and Gagliardo-Nirenberg inequalities with weight  are detailed  in the Appendix \ref{appendix}. 

\section{Regularity of the solution when $\alpha \le2$}\label{sec:regularity}

Following the analysis in \cite{LBL} we  observe that the Fokker-Planck-type equation \eqref{eq:FP} may be rewritten in various equivalent formulations. Indeed, since for all $t\ge0$ and $w \in (-1,1)$ 
\[
\dfrac{\partial^2}{\partial w^2} \left( H(w)f(w,t)\right) = -2 \dfrac{\partial}{\partial w}\left(wf(w,t) \right) + \dfrac{\partial}{\partial w}(H(w)\dfrac{\partial}{\partial w}f(w,t)),
\]
we have that \eqref{eq:FP} admits the following equivalent formulation
%In this short Section, for $w\in(-1,1)$, we will consider the Fokker--Planck equation \fer{eq:FP} in its equivalent form
\begin{equation}
\label{eq:FP1}
\dfrac{\partial}{\partial t} f(w,t) =\dfrac{\partial}{\partial w} \left\{ wf(w,t)\left[(1-2\lambda)+\beta H^\alpha(w) f^\alpha(w,t)\right] + \lambda H(w) \dfrac{\partial}{\partial w} f(w,t)\right\}. 
\end{equation}
Similarly, from 
\[
\dfrac{\partial^2}{\partial w^2}(H(w) f) = - 2f -4w\dfrac{\partial}{\partial w} f + H(w)\dfrac{\partial^2}{\partial w^2} f
\]
we have that \eqref{eq:FP} can be rewritten as
\begin{equation}
\label{eq:FP2}
\dfrac{\partial}{\partial t}f(w,t) +2\lambda f + 4\lambda w \dfrac{\partial}{\partial w}f = \dfrac{\partial}{\partial w} \left[ wf(w,t)(1+\beta H^\alpha(w) f^\alpha(w,t))\right]+ \lambda H(w)\dfrac{\partial^2}{\partial w^2} f(w,t). 
\end{equation}
In the rest of this section we will adopt the second formulation of the Fokker-Planck model as detailed in equation \eqref{eq:FP1}. In the rest of this Section, we will assume that equation \fer{eq:FP1} has a smooth solution. We do not wish to be too precise here about the meaning of a smooth solution; this just means that all the integrability and differentiability properties which are needed in the proof of the forthcoming results are satisfied. Moreover, we will assume that the positivity of the solution is propagated in time.  

To reduce the number of parameters, we can resort to the case with a diffusion parameter equal to one. Indeed, in the time scale $\tau = \lambda t$, the model \eqref{eq:FP} can be rewritten as 
\begin{equation}
\label{eq:FP1_scaled}
\dfrac{\partial}{\partial \tau} f(w,\tau) = \dfrac{\partial}{\partial w} \left\{ wf(w,\tau)\left[\lambda^*+\beta^* H^\alpha(w) f^\alpha(w,\tau)\right] +  H(w) \dfrac{\partial}{\partial w} f(w,\tau)\right\}
\end{equation}
being $\lambda^* = \frac{1-2\lambda}{\lambda} \in \mathbb R$ and $\beta^* = \frac{\beta}{\lambda}$. 

The smoothness assumption, coupled with the no-flux boundary conditions, guarantees that, if the initial density $f_0(w)\ge 0$ belongs to $L^1(-1,1)$, the $L^1$-norm of the nonnegative solution is preserved in time (conservation of mass). Therefore, for each $\tau >0$ we have
\[
\int_{-1}^1 f(w,\tau)\, dw = \int_{-1}^1 f_0(w)\, dw.
\]
Let $0<\alpha <2$, and let us suppose now that  $f_0(w)\ge 0$ belongs to $L^1(-1,1)\cap L^2(-1,1)$, and let us recover the evolution in time of the $L^2$-norm of the solution. Integration by parts gives
\begin{equations}\label{L2}
&\frac d{d\tau}\int_{-1}^1 f^2(w,\tau)\, dw = -2 \int_{-1}^1 H(w)\left| \frac{\partial}{\partial w} f(w,\tau)\right|^2\, dw + \\
&+\lambda^*\int_{-1}^1 f^2(w,\tau)\, dw + \frac{2\beta^*}{\alpha +2} \int_{-1}^1H^{\alpha -1}(w)[1-(1+2\alpha)w^2] f^{\alpha +2}(w,\tau)\, dw \\
&\le -2 \int_{-1}^1 H(w)\left| \frac{\partial}{\partial w} f(w,\tau)\right|^2\, dw 
+ \lambda^* \int_{-1}^1 f^2(w,\tau)\, dw + \frac{2\beta^*}{\alpha +2} \int_{-1}^1 f^{\alpha +2}(w,\tau)\, dw.
\end{equations}
Let us suppose first that $\lambda^*>0$. Then for any $t >0$, if $\|f(\cdot,t)\|_{L^2}^2  \le 2^{\alpha/(\alpha +2)}$, the $L^2$-norm of the solution at time $t $ is bounded. On the contrary, if $\|f(\cdot,t)\|_{L^2}^2  > 2^{\alpha/(\alpha +2)}$, from the H\"older inequality we get
\begin{equations}\label{LL}
 \int_{-1}^1 f^2(w,t)\, dw &\le
  \left(\int_{-1}^1 f^{2+\alpha}(w,t)\, dw\right)^{2/(\alpha +2)}\left(\int_{-1}^1 1\, dw\right)^{\alpha/(\alpha +2)}\\ &=  2^{\alpha/(\alpha +2)} \left(\int_{-1}^1 f^{2+\alpha}(w,t)\, dw\right)^{2/(\alpha +2)} \le  2^{\alpha/(\alpha +2)} \int_{-1}^1 f^{2+\alpha}(w,t)\, dw.
\end{equations}
Using this bound into \fer{L2} we obtain
\[%\label{L3}
\frac d{dt}\int_{-1}^1 f^2(w,t)\, dw \le -2 \int_{-1}^1 H(w)\left| \frac{\partial  f(w,t)}{\partial w}\right|^2\, dw 
+ C_{\alpha,\beta^*}^{\lambda^*}  \int_{-1}^1 f^{\alpha +2}(w,t)\, dw.
\]
where
\[
C_{\alpha,\beta^*}^{\lambda^*} = 2^{\frac{\alpha}{\alpha +2}}\lambda^*+ \frac{2\beta^*}{\alpha +2}>0.
\]
If $\lambda^*<0$, from \eqref{L2} we get
\[
\dfrac{d}{d\tau}\int_{-1}^1 f^2(w,\tau) dw \le -2 \int_{-1}^1 H(w)\left| \dfrac{\partial f(w,\tau)}{\partial w}\right|^2dw + C_{\alpha,\beta}^0 \int_{-1}^1 f^{\alpha+2}(w,\tau)dw. 
\]
In general, if we define $C_{\alpha,\beta^*} = \max\left\{C_{\alpha,\beta^*}^{\lambda^*},C_{\alpha,\beta^*}^0\right\}$ we get
\begin{equations}\label{L3}
&\frac d{d\tau}\int_{-1}^1 f^2(w,\tau)\, dw \\
&\le -2 \int_{-1}^1 H(w)\left| \frac{\partial  f(w,\tau)}{\partial w}\right|^2\, dw 
+ C_{\alpha,\beta^*}  \int_{-1}^1 f^{\alpha +2}(w,\tau)\, dw.
\end{equations}
Since $0<\alpha < 2$ the new Gagliardo--Nirenberg with weight proven in Lemma \ref{lem:2} shows that
\[
\int_{-1}^1 H(w)\left| \frac{\partial  f(w,t)}{\partial w}\right|^2\, dw \ge C_{GN}^{3/(\alpha +1)} \left(\int_{-1}^1 f^{2+\alpha}(w,t)\, dw\right)^{3/(\alpha +1)}  \left(\int_{-1}^1 f_0(w)\, dw\right)^{-(\alpha+4)/(\alpha +1)}.
\]
On the other hand, inequality \fer{LL} implies
\begin{equations}\label{L4}
& C_{\alpha,\beta^*}  -2 C_{GN}^{3/(\alpha +1)} \left(\int_{-1}^1 f^{2+\alpha}(w,t)\, dw\right)^{\frac{2-\alpha}{\alpha +1}}  \left(\int_{-1}^1 f_0(w)\, dw\right)^{-\frac{\alpha+4}{\alpha +1}}  \\
&\le  C_{\alpha,\beta^*}  -  C_{GN}^{3/(\alpha +1)}2^{2/(\alpha +2)}\left(\int_{-1}^1 f^2(w,t)\, dw\right)^{\frac{2-\alpha}{\alpha +1}}  \left(\int_{-1}^1 f_0(w)\, dw\right)^{-\frac{\alpha+4}{\alpha +1}} .
\end{equations}

Thus, the right-hand side is negative when
\[
\int_{-1}^1 f^2(w,t)\, dw  \ge \left(\frac{C_{\alpha,\beta^*}}{ 2^{2/(\alpha +2)}{ C_{GN}^{3/(\alpha +1)}}}\right)^{\frac{\alpha +1}{2-\alpha}}.
\]
Therefore, the norm $\|f(\cdot,t)\|_{L^2}^2 $ is bounded for every $t >0$, since it can be bounded from above as follows
\[
\|f(\cdot,t)\|_{L^2}^2 \le \max\left\{\|f_0\|_{L^2}^2;  2^{\alpha/(\alpha +2)}; \left(\frac{C_{\alpha,\beta^*}}{ 2^{2/(\alpha +2)}{ C_{GN}^{3/(\alpha +1)}}}\right)^{\frac{\alpha +1}{2-\alpha}} \right\}.
\] 
The case $\alpha =2$ represents a limit case. Indeed, as $\alpha =2$, inequality \fer{L4} loses its dependence on $\|f(\cdot,t)\|_{L^2}$, and the sign of the last expression only depends on the value of the initial mass. Therefore, the $L^2$-norm of the solution remains bounded in time as soon as the initial mass is smaller than a constant, i.e.
\[
 \int_{-1}^1 f_0(w)\, dw \le  \left(\frac{C_{\alpha,\beta^*} }{ C_{GN}^{3/(\alpha +1)}2^{2/(\alpha +2)}}\right)^{\frac{\alpha+4}{\alpha +1}}.
\]

\section{Loss of  $L^2$-regularity for $\alpha >2$}\label{sect:2}

The  analysis of Section \ref{sec:regularity} shows that problems with $L^2$-regularity of the solution to \fer{eq:FP} can appear starting from the value $\alpha =2$, and the shape of the steady state \fer{steady} suggests that the solution will start to develop problems around the value $w=0$.  A first answer to this question can be obtained by resorting to an argument first introduced in \cite{T2}.

We remark that, given a nonnegative function $\phi$ in $L^2((-1,1))$  the second moment of $\phi$ controls its $L^1$-norm. This can be shown by considering that, for any given positive constant $R\le 1$ we have the bound
\begin{equations}\nonumber
\int_{-1}^1 \phi(w)\, dw  &=\int_{|w|\le R} \phi(w)\, dw + \int_{|w|>R} \phi(w)\, dw \\
&\le (2R)^{1/2} \left(\int_{-1}^1 \phi^2(w)\, dw\right)^{1/2} + \frac 1{R^2} \int_{-1}^1w^2 \phi(w)\, dw.
\end{equations}
Optimizing over $R$ we get
\be\label{b5}
\int_{-1}^1 \phi(w)\, dw \le 5\left(\frac 1{2 \sqrt 2}\right)^{4/5}\left(\int_{-1}^1 \phi^2(w)\, dw\right)^{2/5} \left(\int_{-1}^1 w^2 \phi(w)\, dw\right)^{1/5}.
\ee
Therefore, since the solution to equation \fer{eq:FP} preserves the mass, the boundedness of the $L^2$-norm of the solution prevents the second moment to be lower than a value determined by the size of the $L^2$-norm. Indeed, equation \eqref{b5} can be rewritten as
\be\label{bbound}
\int_{-1}^1 w^2 f(w,t)\, dw \ge \left(\frac 15\right)^5\frac{(2 \sqrt 2)^4\mu^5}{\left(\int_{-1}^1 f^2(w,t)\, dw\right)^{2}},
\ee
where $\mu = \int_{-1}^1 f_0(w)\, dw$ is the  initial mass. Looking at inequality \eqref{bbound} we get that we can exploit the relationship between the $L^2$-norm of the solution and its second moment to check  $L^2$-regularity of the solution to equation \fer{eq:FP}.  To this end, we will study in detail the evolution of the second moment. 

Wiring \eqref{eq:FP} in weak form and by considering as a test function $\varphi(w) = w^2$ we get the evolution of the second order moment. We remark that
\[
E(t) = \int_{-1}^1 w^2 f(w,t)dw \le \int_{-1}^1  f(w,t)dw = \mu.
\]
By direct computations we have
\begin{equation}
\label{eq:energy_evo}
\dfrac{d}{dt} E(t) = 2 \lambda \mu  - 2(\lambda+1)E(t) - 2\beta \int_{-1}^1 w^2 H^{\alpha}(w)f^{\alpha+1}(w,t)dw, 
\end{equation}
Hence, at each time, the evolution of the second order moment depends on higher order powers of the kinetic density $f(w,t)$. To have information on the evolution of the energy we need to quantify, in terms of the energy itself, the  integral in \fer{eq:energy_evo}. We prove the following  
\begin{lemma}
\label{prop:1}
Let $\alpha>2$. Then
\[
\int_{-1}^1 w^2 H^{\alpha}(w)f^{\alpha+1}(w,t)dw \ge \dfrac{(\mu-E(t))^{\frac{3\alpha}{2}}}{\left(c_\alpha d_\alpha + d_\alpha^{-2} \right)^{\frac{3\alpha}{2}}E^{\frac{\alpha-2}{2}}}, 
\]
with  $c_{\alpha} = \left( \frac{2\alpha}{\alpha-2} \right)^{\frac{\alpha}{\alpha+1}}>0$ and $d_\alpha  = \left[\frac{2(\alpha+1)}{c_\alpha(\alpha-2)} \right]^{\frac{1}{3}}>0$. 
\end{lemma}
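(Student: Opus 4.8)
The plan is to rephrase the claim as an upper bound on $\mu-E(t)$. Since $H(w)=1-w^2$, one has $\mu-E(t)=\int_{-1}^1 H(w)f(w,t)\,dw$; writing $A:=\int_{-1}^1 w^2H^\alpha(w)f^{\alpha+1}(w,t)\,dw$ for the quantity to be bounded below, the asserted inequality is equivalent, after raising to the power $2/(3\alpha)$ and rearranging, to
\[
\mu-E(t)\le\bigl(c_\alpha d_\alpha+d_\alpha^{-2}\bigr)\,A^{2/(3\alpha)}\,E(t)^{(\alpha-2)/(3\alpha)}.
\]
In the spirit of \eqref{b5}, I would prove this by splitting $\int_{-1}^1 Hf\,dw$ at a free level $R\in(0,1]$ into a central part $\int_{|w|\le R}Hf\,dw$ and a tail $\int_{|w|>R}Hf\,dw$; the exponents $2/(3\alpha)$ and $(\alpha-2)/(3\alpha)$ (whose sum is $1/3$) will then emerge from optimising over $R$.

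For the central part I would write $Hf=(w^2H^\alpha f^{\alpha+1})^{1/(\alpha+1)}\,w^{-2/(\alpha+1)}H^{1/(\alpha+1)}$ and apply Hölder's inequality with conjugate exponents $\alpha+1$ and $(\alpha+1)/\alpha$. Using $H\le 1$ on $(-1,1)$ and --- this is the point where $\alpha>2$ is needed --- the integrability of $w^{-2/\alpha}$ near the origin, namely $\int_{-R}^{R}w^{-2/\alpha}\,dw=\tfrac{2\alpha}{\alpha-2}R^{(\alpha-2)/\alpha}$, this yields
\[
\int_{|w|\le R}Hf\,dw\le c_\alpha\,A^{1/(\alpha+1)}\,R^{(\alpha-2)/(\alpha+1)},\qquad c_\alpha=\Bigl(\tfrac{2\alpha}{\alpha-2}\Bigr)^{\alpha/(\alpha+1)}.
\]
For the tail I would simply bound $H\le 1$ and use $1\le w^2/R^2$ on $\{|w|>R\}$ together with $\int_{|w|>R}w^2f\,dw\le E(t)$, obtaining $\int_{|w|>R}Hf\,dw\le E(t)/R^{2}$. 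Hence $\mu-E(t)\le c_\alpha A^{1/(\alpha+1)}R^{(\alpha-2)/(\alpha+1)}+E(t)R^{-2}$ for every admissible $R$.

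It then remains to minimise the right-hand side over $R$. Its stationary point solves $R^{3\alpha/(\alpha+1)}=\tfrac{2(\alpha+1)}{c_\alpha(\alpha-2)}E(t)A^{-1/(\alpha+1)}=d_\alpha^{3}E(t)A^{-1/(\alpha+1)}$ --- which is precisely why $d_\alpha$ is defined by $d_\alpha^{3}=\tfrac{2(\alpha+1)}{c_\alpha(\alpha-2)}$ --- and there the two terms are proportional, so using $c_\alpha d_\alpha^{3}=\tfrac{2(\alpha+1)}{\alpha-2}$ one finds, after a short computation,
\[
\mu-E(t)\le\frac{3\alpha}{\alpha-2}\,d_\alpha^{-2(\alpha+1)/\alpha}\,A^{2/(3\alpha)}\,E(t)^{(\alpha-2)/(3\alpha)}.
\]
Finally, since the exponent $\alpha/(\alpha+1)<1$ and $\tfrac{2\alpha}{\alpha-2}<\tfrac{2(\alpha+1)}{\alpha-2}$, one has $c_\alpha\le\tfrac{2(\alpha+1)}{\alpha-2}$ and hence $d_\alpha\ge1$; this gives $d_\alpha^{-2(\alpha+1)/\alpha}\le d_\alpha^{-2}$, and the identity $\tfrac{3\alpha}{\alpha-2}d_\alpha^{-2}=c_\alpha d_\alpha+d_\alpha^{-2}$ produces exactly the stated constant. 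Raising back to the power $3\alpha/2$ and rearranging finishes the argument.

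The argument is conceptually routine; the two points that need care are, first, that the optimal $R$ above may exceed $1$, in which case one takes instead $R=1$ --- then the tail vanishes, the Hölder estimate runs over all of $(-1,1)$ using $\int_{-1}^1 w^{-2/\alpha}H^{1/\alpha}\,dw\le\tfrac{2\alpha}{\alpha-2}$, and the resulting bound $\mu-E(t)\le c_\alpha A^{1/(\alpha+1)}$ still implies the claim in the complementary regime $E(t)\ge d_\alpha^{-3}A^{1/(\alpha+1)}$ (again using $d_\alpha\ge1$ and $2(\alpha+1)\le3\alpha$) --- and second, the bookkeeping of the powers of $R$, $A$ and $E(t)$ through the optimisation together with the repackaging of $\tfrac{3\alpha}{\alpha-2}d_\alpha^{-2}$ into the two-term form $c_\alpha d_\alpha+d_\alpha^{-2}$.
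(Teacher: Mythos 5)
Your argument is essentially the paper's own proof: the same splitting at a level $R$, the same H\"older step exploiting the integrability of $w^{-2/\alpha}$ for $\alpha>2$ (yielding the same $c_\alpha$), the same Chebyshev bound on the tail, and the same optimisation in $R$ producing $d_\alpha$; the only cosmetic difference is that you work directly with $\int_{-1}^1 Hf\,dw=\mu-E$ while the paper carries $\int_{-1}^1 H^{\alpha/(\alpha+1)}f\,dw$ through the computation and compares it to $\mu-E$ at the end. If anything, your bookkeeping is slightly more careful than the paper's, since you make explicit both the reduction of the exact optimal constant $\tfrac{3\alpha}{\alpha-2}d_\alpha^{-2(\alpha+1)/\alpha}$ to the stated form $c_\alpha d_\alpha+d_\alpha^{-2}$ via $d_\alpha\ge1$ and the edge case where the optimal $R$ exceeds $1$ (which the paper handles by setting $H=(1-w^2)_+$).
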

\begin{proof}
For any $R\ge0$ we have
\begin{equation}
\label{eq:es1}
\begin{split}
&\int_{-1}^1 H^{\frac{\alpha}{\alpha+1}}(w) f(w,t)dw = \int_{|w|<R}H^{\frac{\alpha}{\alpha+1}}(w) f(w,t)dw + \int_{|w|\ge R}H^{\frac{\alpha}{\alpha+1}}(w) f(w,t)dw \le \\
&\quad \int_{|w|<R}H^{\frac{\alpha}{\alpha+1}}(w) f(w,t)dw +\dfrac{1}{R^2} \int_{|w|\ge R}w^2H^{\frac{\alpha}{\alpha+1}}(w) f(w,t)dw \le  \\
&\quad\int_{|w|<R}H^{\frac{\alpha}{\alpha+1}}(w) f(w,t)dw +\dfrac{1}{R^2} \int_{-1}^1w^2H^{\frac{\alpha}{\alpha+1}}(w) f(w,t)dw,
\end{split}
\end{equation}
In  inequality \fer{eq:es1} we defined $H(w) = (1-w^2)_+$ to overcome any a priori bounds on $R>0$. The first integral in \eqref{eq:es1} can be rewritten as follows
\[
\int_{|w|<R}H^{\frac{\alpha}{\alpha+1}}(w) f(w,t)dw = \int_{|w|<R} H^{\frac{\alpha}{\alpha+1}}(w) w^{-\frac{2}{\alpha+1}}\left( w^{\frac{2}{\alpha+1}}f(w,t) \right)dw.
\]
By applying Hölder's inequality with $p = \alpha+1$ and $p_* = \dfrac{\alpha+1}{\alpha}$ we get
\[
\int_{|w|< R} H^{\frac{\alpha}{\alpha+1}}(w)f(w,t)dw \le \left( \int_{|w|<R} w^{-\frac{2}{\alpha}}dw \right)^{\frac{\alpha}{\alpha+1}} \left( \int_{|w|<R} w^2 H^\alpha(w)f^{\alpha+1}(w,t)dw \right)^{\frac{1}{\alpha+1}}. 
\]
Since $\alpha>2$
\[
\int_{-R}^R \left(\dfrac{1}{w^2}\right)^{1/\alpha} dw = 2 \int_{0}^R \left(\dfrac{1}{w^2}\right)^{1/\alpha} dw = \dfrac{2\alpha}{\alpha-2} R^{\frac{\alpha-2}{\alpha}}. 
\]
Hence, we have 
\[
\int_{|w|< R} H^{\frac{\alpha}{\alpha+1}}(w)f(w,t)dw \le \left( \dfrac{2\alpha}{\alpha-2} \right)^{\frac{\alpha}{\alpha+1}}R^{\frac{\alpha-2}{\alpha+1}} \left( \int_{-1}^1 w^2 H^\alpha(w)f^{\alpha+1}(w,t)dw\right)^{\frac{1}{\alpha+1}}
\]. 
Therefore \eqref{eq:es1} can be written as follows
\begin{equation}
\label{eq:inR}
\begin{split}
&\int_{-1}^1 H^{\frac{\alpha}{\alpha+1}}(w) f(w,t)dw \le c_\alpha R^{\frac{\alpha-2}{\alpha+1}} \left(\int_{-1}^1 w^2 H^\alpha(w)f^{\alpha+1}(w,t)dw \right)^{\frac{1}{\alpha+1}}  \\
&\quad+ \dfrac{1}{R^2} \int_{-1}^1 w^2 H^{\frac{\alpha}{\alpha+1}}(w)f(w,t)dw, 
\end{split}
\end{equation}
being $c_{\alpha} = \left( \dfrac{2\alpha}{\alpha-2} \right)^{\frac{\alpha}{\alpha+1}}$. 
Next, we can optimise with respect to $R>0$ the function 
\[
g(R) = c_\alpha R^{\frac{\alpha-2}{\alpha+1}} \left(\int_{-1}^1 w^2 H^\alpha(w)f^{\alpha+1}(w,t)dw \right)^{\frac{1}{\alpha+1}}  + \dfrac{1}{R^2} \int_{-1}^1 w^2 H^{\frac{\alpha}{\alpha+1}}(w)f(w,t)dw. 
\]
The function $g(R)$ attains its unique minimum in
\[
R^* = \left[ \frac{2(\alpha+1)}{c_\alpha(\alpha-2)} \right]^{\frac{\alpha+1}{3\alpha}} \dfrac{\left(\displaystyle\int_{-1}^1 w^2H^{\frac{\alpha}{\alpha+1}}(w)f(w,t)dw\right)^{\frac{\alpha+1}{3\alpha}}}{\left(\displaystyle\int_{-1}^1 w^2 H^\alpha(w)f^{\alpha+1}(w,t)dw\right)^{\frac{1}{3\alpha}}}. 
\]
Therefore, from \eqref{eq:inR} we get
\begin{equation}
\label{eq:inRop}
\begin{split}
&\int_{-1}^1 H^{\frac{\alpha}{\alpha+1}}(w)f(w,t)dw \le \\
&\qquad (c_\alpha d_\alpha + d_\alpha^{-2}) \left( \int_{-1}^1 w^2 H^{\alpha}(w)f^{\alpha+1}(w,t)dw \right)^{\frac{2}{3\alpha}} \left( \int_{-1}^1 w^2 H^{\frac{\alpha}{\alpha+1}}(w) f(w,t)dw\right)^{\frac{\alpha-2}{3\alpha}}, 
\end{split}
\end{equation}
being $d_\alpha = \left[\dfrac{2(\alpha+1)}{c_\alpha(\alpha-2)} \right]^{\frac{1}{3}}$. Furthermore, since for any $\alpha>0$ we have
\[
0<\mu - E(t) = \int_{-1}^1 H(w)f(w,t)dw \le \int_{-1}^1 H^{\frac{\alpha}{\alpha+1}}(w)f(w,t)dw, 
\]
we may write \eqref{eq:inRop} as follows
\[
\int_{-1}^1 w^2 H^\alpha(w) f^{\alpha+1}(w,t)dw \ge \dfrac{(\mu-E(t))^{\frac{3\alpha}{2}}}{\left(c_\alpha d_\alpha + d_\alpha^{-2} \right)^{\frac{3\alpha}{2}}E^{\frac{\alpha-2}{2}}},
\]
which concludes the proof.
\end{proof}

The result of Lemma \ref{prop:1} allows to show that the evolution of the second order moment of the solution of  \eqref{eq:FP}, as given by in \eqref{eq:energy_evo}, attains  the value zero in finite time if the initial energy is sufficiently low. Indeed, incorporating the estimate of Proposition \ref{prop:1} into \eqref{eq:energy_evo} we obtain
\[
\begin{split}
\dfrac{d}{dt} E(t) \le 2\lambda \mu - 2(\lambda+1)E -2\beta\dfrac{(\mu-E(t))^{\frac{3\alpha}{2}}}{\left(c_\alpha d_\alpha + d_\alpha^{-2} \right)^{\frac{3\alpha}{2}}E^{\frac{\alpha-2}{2}}} 
\end{split}\] 
A stronger inequality holds, since $\lambda>0$ and $E\ge0$ for all $t\ge0$. Discarding the term $- 2(\lambda+1)E$, we reduce to 
\begin{equation}
\label{eq:ode_energy}
\dfrac{d}{dt}E\le 2\lambda \mu  -\dfrac{2\beta}{\left(c_\alpha d_\alpha + d_\alpha^{-2} \right)^{\frac{3\alpha}{2}}}\dfrac{(\mu-E(t))^{\frac{3\alpha}{2}}}{E^{\frac{\alpha-2}{2}}(t)}. 
\end{equation}
We can observe that, at time $t=0$  the following quantity 
\begin{equation}
\label{eq:phi0}
\Phi(E(0)) = 2\lambda \mu  -\dfrac{2\beta}{\left(c_\alpha d_\alpha + d_\alpha^{-2} \right)^{\frac{3\alpha}{2}}}\dfrac{(\mu-E(0))^{\frac{3\alpha}{2}}}{E^{\frac{\alpha-2}{2}}(0)}<0, 
\end{equation}
is negative provided $E(0)$ is sufficiently small. In Figure \ref{fig:1} we depict the relation between $E(0)$ and $\Phi(E(0))$ and for several values of $\alpha >2$, $\beta=1$ and for a fixed initial mass $\mu = 1$. We can argue that $0<E_c<\mu$ exists such that, for any $f(w,0)$ solution to \eqref{eq:FP} with $\int_{-1}^1 w^2 f(w,0)dw < E_c$, we have $\Phi(E(0))<0$. It is interesting to observe that, coherently with \eqref{eq:phi0}, for large values of $\lambda>0$ it is required a smaller second order moment $E(0)>0$ to guarantee that $\Phi(E(0))<0$. 

 \begin{figure}
 \centering
 \includegraphics[scale = 0.35]{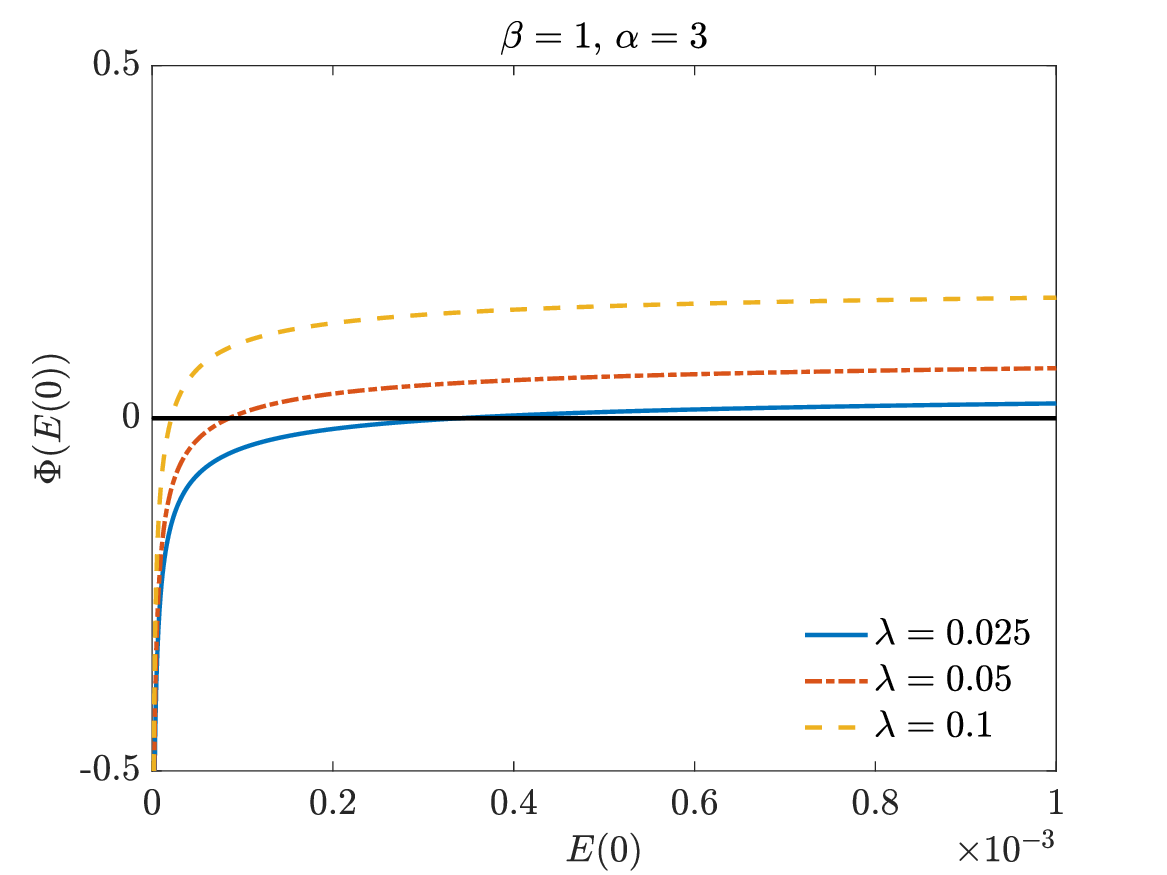}
 \includegraphics[scale = 0.35]{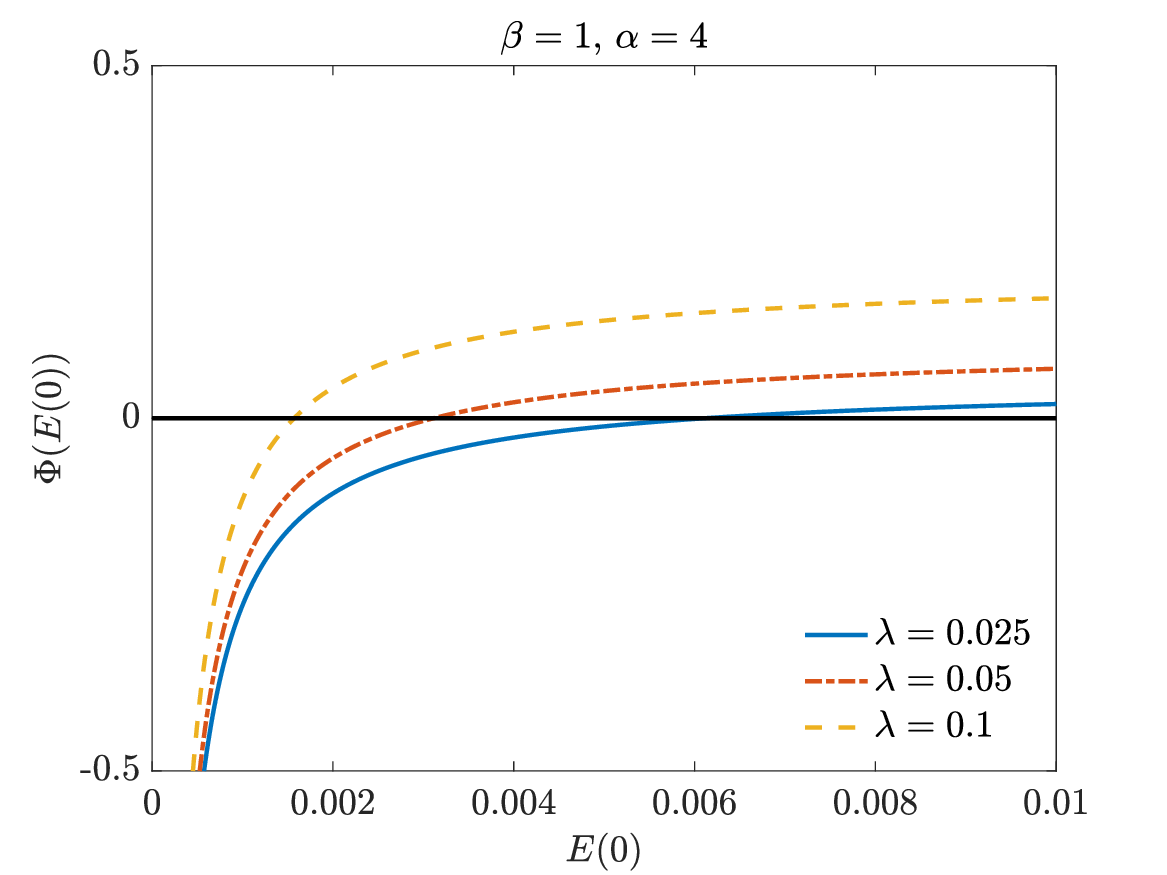}
 \caption{Values of $\Phi(E(0))$ in \eqref{eq:phi0} for a set of initial energies $E(0)>0$ and several values of the diffusion coefficient $\lambda =0.025,0.05,0.1$. We considered the case $\alpha = 3$ (left panel) $\alpha = 4 $ (right panel) and we fixed $\beta = 1$. }
 \label{fig:1}
 \end{figure}
 
Similarly, we can determine the evolution of the temperature 
\[
\mathcal T(t) = \dfrac{1}{\mu} \int_{-1}^1 w^2 f(w,t)dw, 
\]
from \eqref{eq:ode_energy} we get
\[
\dfrac{d}{dt} \mathcal T(t) \le 2\lambda - \dfrac{2\beta}{\left(c_\alpha d_\alpha + d_\alpha^{-2} \right)^{\frac{3\alpha}{2}}}\mu^{\alpha+1}\dfrac{(1-E(t))^{\frac{3\alpha}{2}}}{E^{\frac{\alpha-2}{2}}(t)}. 
\]
Since $\alpha+1 >0$ for any $\alpha>2$ and given second order moment $E(0)$, we can define the following quantity 
\[
\Psi(\mu) = 2\lambda - \dfrac{2\beta}{\left(c_\alpha d_\alpha + d_\alpha^{-2} \right)^{\frac{3\alpha}{2}}}\mu^{\alpha+1}\dfrac{(1-E(0))^{\frac{3\alpha}{2}}}{E^{\frac{\alpha-2}{2}}(0)}, 
\]
which is negative provided the initial mass is sufficiently large
\[
\mu > \left( \dfrac{\lambda}{\beta \left(c_\alpha d_\alpha + d_\alpha^{-2} \right)^{\frac{3}{2\alpha}} } \dfrac{E^{\frac{\alpha-2}{2}}(0)}{ (1-E(0))^{\frac{3\alpha}{2}}} \right)^{\frac{1}{\alpha+1}}. 
\]
It is worth to remark that the obtained relation between the initial mass and the diffusion coefficient $\lambda>0$ is such that the large the diffusion the large $\mu>0$ should be to guarantee $\Psi(\mu)<0$.

 Therefore, provided that $\Phi(E(0))<0$ or $\Psi(\mu)<0$,  corresponding to a sufficiently small initial energy or sufficiently large initial mass, respectively, from  \eqref{eq:ode_energy} we have 
 \[
 \dfrac{d}{dt} E(t) \le 2\lambda \mu - \dfrac{2\beta}{\left(c_\alpha d_\alpha + d_\alpha^{-2} \right)^{\frac{3}{2\alpha}}}  \dfrac{(\mu-E(0))^{\frac{3}{2\alpha}}}{E^{\frac{\alpha-2}{2\alpha^2}}}. 
 \]
 which is equivalent to 
 \begin{equation}
 \label{eq:diffE}
 \dfrac{d}{dt} E(t) \le 2\lambda \mu - \dfrac{\Lambda}{E^\gamma(t)} = -  \dfrac{b - 2\lambda \mu E^\gamma(t)}{E^\gamma(t)} \le -  \dfrac{\Lambda - 2\lambda \mu E^\gamma(0)}{E^\gamma(t)}
 \end{equation}
 with $\Lambda = \dfrac{2\beta (\mu-E(0))^{\frac{3}{2\alpha}}}{\left(c_\alpha d_\alpha + d_\alpha^{-2} \right)^{\frac{3}{2\alpha}}} $ and $\gamma = \frac{\alpha-2}{2\alpha^2}\in (0,\frac{1}{16})$ for any $\alpha>2$. Therefore, from \eqref{eq:diffE} we get
 \[
 E^{\gamma+1}(t) \le E^{\gamma+1}(0) - (\gamma+1)(\Lambda-2\lambda \mu E^{\gamma}(0))t, 
 \]
 and at time 
 \[ 
 \bar t = \dfrac{E^{\gamma+1}(0)}{(\gamma+1)(\Lambda-2\lambda \mu E^\gamma(0))}
 \]
we have $E(\bar t) = 0$ which is clearly in contradiction with inequality \eqref{bbound}, unless there is blow-up of $L^2$-norm of the solution. We observe that $\bar t>0$ provided the initial energy is sufficiently small and such that
\[
E(0) <\left( \dfrac{\Lambda}{2\lambda \mu}\right)^{1/\gamma}
\]
Let us better explain the meaning of the previous result. If $E(t) \to 0$ as $t \to \bar t$, also the mean value of the solution converges to zero as $t \to \bar t$. Indeed, owing to mass conservation, by Cauchy-Schwarz inequality
\[
\left| \int_{-1}^1 wf(w,t)\, dw \right| \le \int_{-1}^1 |w|f(w,t)\, dw \le E(t)^{1/2} \left( \int_{-1}^1 f_0(w)\, dw \right)^{1/2}.
\]
Consequently, as $t \to \bar t$ 
\[
\int_{-1}^1 \left( w -  \int_{-1}^1 wf(w,t)\, dw\right)^2f(w,t) \, dw \to 0.
\]
and the solution at $t=\bar t$ coincides with a Dirac delta concentrated in $w=0$. 

%%%%%%%%%%%%%%%%%%%%%%%%%%%%%%%%%%%%%%%%%%%
\section{Regularity of the solution when $\alpha >2$}
\label{sect:4}

Let us consider $\beta>0$ and $\alpha>2$. In this case, as showed in Section \ref{sect:2}, we cannot prove propagation of $L^2((-1,1))$ regularity since the solution may blow-up in finite time for a sufficiently large initial mass. Therefore, to get information about the possible behavior of the solution for large times, we study the evolution of the following weighted norm
\begin{equation}
\label{eq:norm}
\| f\|_{L^q_p} = \left( \int_{-1}^1 |w|^p f^q(w,t)dw \right)^{1/q}, \qquad p,q>0 
\end{equation}

\subsection{The case $q=2$}
In the case $q=2$, the norm defined in \eqref{eq:norm} corresponds to the $L^2_p$ norm. Writing \eqref{eq:FP} in weak form we get
\[
\begin{split}
&\dfrac{d}{dt}\int_{-1}^1 |w|^p f^2(w,t)dw =  \int_{-1}^1 2|w|^pf \dfrac{\partial}{\partial w}\left[ wf(1+\beta H^\alpha f^\alpha) + \lambda \dfrac{\partial}{\partial w}(Hf)\right] dw.
\end{split}\]
Integrating by parts we have
\begin{equation}
\label{eq:first_dis}
\begin{split}
\dfrac{d}{dt} \| f \|^2_{L^2_p} =& -2 \int_{-1}^1\dfrac{\partial}{\partial w} \left(|w|^pf \right) wf\left(1+\beta H^\alpha f^\alpha \right)dw \\
&-2\lambda \int_{-1}^1 \left(p|w|^{p-2}wf + |w|^p\dfrac{\partial}{\partial w} f \right) \dfrac{\partial}{\partial w} (Hf)dw \\
=&(-\lambda p^2 -(1-\lambda)p + 1-2\lambda)\|f \|_{L^2_p}^2 \\
&+ \dfrac{2\beta}{\alpha+2} \int_{-1}^1 \dfrac{\partial}{\partial w} \left( \dfrac{wH^{\alpha}}{|w|^{p(\alpha+1)}}\right)f^{\alpha+2}|w|^{p(\alpha+2)} dw \\
&  - 2\lambda \int_{-1}^1 H|w|^p \left(\dfrac{\partial f}{\partial w} \right)^2dw + \lambda p(p-1) \int_{-1}^1 |w|^{p-2}f^2dw. 
\end{split}
\end{equation}
The last integral is negative if $p \in (0,1)$. 
Also
\[
\dfrac{\partial}{\partial w} \left( \dfrac{wH^{\alpha}}{|w|^{p(\alpha+1)}}\right) <0,
\]
provided $p = \dfrac{\alpha-2}{\alpha+1}$ and $\alpha\ge3$.\\
Consequently, let us assume that $\alpha \ge 3$.
Then, since for $w \in (-1,1)$ 
\[
\int_{-1}^1 (1-|w|^{2-p})|w|^p \left(\dfrac{\partial f}{\partial w} \right)^2\, dw \le \int_{-1}^1 (1-w^2)|w|^p, \left(\dfrac{\partial f}{\partial w} \right)^2\, dw, 
\]
the result of Lemma \ref{lem:1bis},  guarantees that, proceeding as in Section \ref{sec:regularity},   the balance between the terms
\[
(-\lambda p^2 -(1-\lambda)p + 1-2\lambda)\|f \|_{L^2_p}^2   - 2\lambda \int_{-1}^1 H|w|^p \left(\dfrac{\partial f}{\partial w} \right)^2\, dw
\]
controls the upper bound of the weighted $L^2$-norm of $f$.\\
The following result holds
\begin{theorem}
Let $f = f(w,t)$ be a solution to \eqref{eq:FP} and let $\beta>0$. If the initial density $f(w,0) \in L^2_p$  then the $L^2_p$ norm of $f(w,t)$ defined in \eqref{eq:norm}, is uniformly bounded for all $t>0$ 
provided $p  = \dfrac{\alpha-2}{\alpha+1}$ and $\alpha\ge3$. 
\end{theorem}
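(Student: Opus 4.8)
The plan is to run the energy estimate of Section~\ref{sec:regularity} in the weighted space $L^2_p$, starting from the identity \eqref{eq:first_dis}. Under the hypotheses $\alpha\ge 3$ and $p=\tfrac{\alpha-2}{\alpha+1}$ (which lies in $(0,1)$, with $p=\tfrac14$ at $\alpha=3$ and $p\to 1$ as $\alpha\to\infty$), two of the four terms on the right-hand side of \eqref{eq:first_dis} are non-positive and may be discarded: the term $\lambda p(p-1)\int_{-1}^1|w|^{p-2}f^2\,dw\le 0$ since $p<1$, and the drift contribution $\tfrac{2\beta}{\alpha+2}\int_{-1}^1\partial_w\!\left(wH^\alpha|w|^{-p(\alpha+1)}\right)f^{\alpha+2}|w|^{p(\alpha+2)}\,dw\le 0$ since, because $p(\alpha+1)=\alpha-2$, the inner function equals $\operatorname{sgn}(w)\,|w|^{3-\alpha}(1-w^2)^\alpha$, which is odd and whose derivative on $(0,1)$ has the sign of $(3-\alpha)-(3+\alpha)w^2\le 0$. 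What remains is the differential inequality
\[
\frac{d}{dt}\|f(\cdot,t)\|_{L^2_p}^2 \;\le\; c_p\,\|f(\cdot,t)\|_{L^2_p}^2 \;-\; 2\lambda\int_{-1}^1 H(w)\,|w|^p\,|\partial_w f(w,t)|^2\,dw, \qquad c_p:=-\lambda p^2-(1-\lambda)p+1-2\lambda .
\]

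Next I bound the dissipation from below. If $c_p\le 0$ the right-hand side above is already non-positive, so $\|f(\cdot,t)\|_{L^2_p}^2\le\|f_0\|_{L^2_p}^2$ for all $t>0$ and we are done. If $c_p>0$, I invoke the weighted Nash-type inequality of Lemma~\ref{lem:1bis}, which bounds $\int_{-1}^1|w|^p f^2\,dw$ by the weighted Dirichlet form $\int_{-1}^1(1-|w|^{2-p})|w|^p|\partial_w f|^2\,dw$ together with the mass $\int_{-1}^1 f\,dw$. Combining this with the elementary comparison $(1-|w|^{2-p})|w|^p\le(1-w^2)|w|^p$ on $[-1,1]$ (valid because $|w|^{2-p}\ge|w|^2$ when $p\ge0$) and with mass conservation $\int_{-1}^1 f(\cdot,t)\,dw=\mu$, yields a lower bound of the form $\int_{-1}^1 H|w|^p|\partial_w f|^2\,dw\ge C_N\,\mu^{-\theta}\,\|f(\cdot,t)\|_{L^2_p}^{2\kappa}$ with a superlinear exponent $\kappa>1$. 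Writing $y(t):=\|f(\cdot,t)\|_{L^2_p}^2$, the differential inequality becomes $y'(t)\le c_p\,y(t)-2\lambda C_N\mu^{-\theta}y(t)^\kappa$, whose right-hand side is strictly negative whenever $y(t)>\left(c_p\mu^\theta/(2\lambda C_N)\right)^{1/(\kappa-1)}$; a standard comparison argument then gives, for all $t>0$,
\[
\|f(\cdot,t)\|_{L^2_p}^2 \;\le\; \max\left\{\|f_0\|_{L^2_p}^2\,,\;\left(\frac{c_p\,\mu^\theta}{2\lambda C_N}\right)^{1/(\kappa-1)}\right\},
\]
which is the asserted uniform-in-time bound. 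The sign of $c_p$ is immaterial, both cases being absorbed in this estimate.

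The substantive difficulty is concentrated entirely in Lemma~\ref{lem:1bis}: one has to prove a Nash-type inequality for the doubly degenerate weight $(1-|w|^{2-p})|w|^p$, which vanishes both at the interior point $w=0$ and at the endpoints $w=\pm 1$, and which controls the weighted $L^2_p$ norm by the weighted Dirichlet form together with the \emph{unweighted} $L^1$ mass; this is deferred to the Appendix. Within the present argument the only points requiring care are: (i) checking that the dissipation actually produced in \eqref{eq:first_dis}, namely $\int_{-1}^1 H|w|^p|\partial_w f|^2\,dw$, dominates the smaller weight $(1-|w|^{2-p})|w|^p$ appearing in the hypothesis of Lemma~\ref{lem:1bis}, which is precisely the comparison displayed just before the statement; and (ii) verifying that the Nash exponent satisfies $\kappa>1$, so that the superlinear dissipation overcomes the linear growth term $c_p\,y$ for $y$ large, exactly as in the subcritical regime of Section~\ref{sec:regularity}. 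No finer control of the constants is needed for the boundedness conclusion.
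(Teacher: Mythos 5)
Your proposal is correct and follows essentially the same route as the paper: the same decomposition of \eqref{eq:first_dis}, the same sign checks for the $\lambda p(p-1)$ term and for $\partial_w\bigl(wH^\alpha|w|^{-p(\alpha+1)}\bigr)$ under $p=\tfrac{\alpha-2}{\alpha+1}$, $\alpha\ge3$, and the same appeal to Lemma~\ref{lem:1bis} via the comparison $(1-|w|^{2-p})|w|^p\le(1-w^2)|w|^p$ to close the differential inequality. The explicit ODE comparison giving $\|f(\cdot,t)\|_{L^2_p}^2\le\max\{\|f_0\|_{L^2_p}^2,(c_p\mu^\theta/(2\lambda C_N))^{1/(\kappa-1)}\}$ is exactly the ``balance'' the paper leaves implicit by referring back to Section~\ref{sec:regularity}.
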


\begin{remark}
We may notice therefore that the formation of a Dirac mass in finite time is not allowed in the introduced space $L_p^2((-1,1))$ unless $p = 1$, which, in turn, would correspond to the case $\alpha\to +\infty$. 
\end{remark}

\begin{remark}
An improvement of the obtained regularity result would require a sharper evaluation of the contribution of the diffusion. 
\end{remark}
\subsection{The case $1\le q <2$}
In the following we will write $q = 1+\gamma$, $\gamma \in (0,1)$ which corresponds to the norm $L^{1+\gamma}_p$ in \eqref{eq:norm}. Proceeding as before, from \eqref{eq:FP} we get
\[
\begin{split}
\dfrac{d}{dt} \int_{-1}^1 |w|^p f^{1+\gamma}(w,t)dw =& -(1+\gamma) \int_{-1}^1 \dfrac{\partial}{\partial w}(|w|^p f^\gamma) \left[wf(1+\beta H^\alpha f^\alpha) + \lambda\dfrac{\partial}{\partial w}(Hf)\right]dw \\
 =& (-\lambda p^2 - (1-\lambda)p + \gamma(1-2\lambda)) \int_{-1}^1 |w|^pf^{\gamma+1}dw  \\
&+ \dfrac{\gamma(1+\gamma)\beta}{\alpha+1+\gamma} \int_{-1}^1 |w|^pf^{\gamma} \dfrac{\partial}{\partial w} \left( \dfrac{wH^\alpha}{|w|^{p(\alpha+1)/\gamma}} \right)dw \\
& - \lambda \gamma(1+\gamma) \int_{-1}^1 |w|^p H f^{\gamma-1}\left(\dfrac{\partial f}{\partial w} \right)^2dw \\
& +\lambda p(p-1) \int_{-1}^1 |w|^{p-2}Hf^{\gamma+1}dw. 
\end{split}\]
As in the previous section, the coefficient of the last integral is negative if $p \in (0,1)$. \\
Also
\begin{equation}
\label{eq:termHq}
\dfrac{\partial}{\partial w} \left( \dfrac{wH^\alpha}{|w|^{p(\alpha+1)/\gamma}} \right) = \dfrac{H^{\alpha-1}}{|w|^{p(\alpha+1)/\gamma}}\left( H - 2\alpha w^2 - \dfrac{p(\alpha+1)}{\gamma}H \right). 
\end{equation}
Hence if $p = (\alpha-2)(\alpha+1)$ we have that \eqref{eq:termHq} is negative for any $\alpha > 2+\gamma$.
Last, since
\[
\int_{-1}^1 |w|^p H f^{\gamma-1}\left(\dfrac{\partial f}{\partial w} \right)^2\, dw = \frac 4{(\gamma +1)^2}\int_{-1}^1 |w|^p H \left(\dfrac{\partial f^{(\gamma +1)/2}}{\partial w} \right)^2\, dw,
\]
using once more the result of Lemma \ref{lem:1bis} we conclude that the balance between the terms
\[
(-\lambda p^2 - (1-\lambda)p + \gamma(1-2\lambda)) \int_{-1}^1 |w|^pf^{\gamma+1}dw - \lambda \gamma(1+\gamma) \int_{-1}^1 |w|^p H f^{\gamma-1}\left(\dfrac{\partial f}{\partial w} \right)^2\, dw
\]
 controls the upper bound of the weighted $L^2$-norm of $f^{(\gamma+1)/2}$. We have
 
 \begin{theorem}
Let $f = f(w,t)$ be a solution to \eqref{eq:FP} and let $\beta>0$. If the initial density $f(w,0) \in L_p^q$, where $1\le q <2$  then the $L_p^q$ norm of $f(w,t)$ defined in \eqref{eq:norm}, is uniformly bounded for all $t>0$ and,  provided $p  = \dfrac{\alpha-2}{\alpha+1}$,  when $\alpha\ge 1+ q$. 
\end{theorem}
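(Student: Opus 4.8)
The plan is to start from the differential identity for $\frac{d}{dt}\int_{-1}^1|w|^pf^{1+\gamma}(w,t)\,dw$ derived just above and, with the distinguished choice $p=\frac{\alpha-2}{\alpha+1}$, to discard all but two of the four terms on its right-hand side, reducing everything to a single scalar differential inequality for $y(t):=\|f(\cdot,t)\|_{L^q_p}^q=\int_{-1}^1|w|^pf^{1+\gamma}(w,t)\,dw$. The term $\lambda p(p-1)\int_{-1}^1|w|^{p-2}Hf^{\gamma+1}\,dw$ is $\le 0$ because $\alpha>2$ forces $p\in(0,1)$, so it is dropped. The drift term is $\le 0$ because, with $p=\frac{\alpha-2}{\alpha+1}$, one has $\frac{p(\alpha+1)}{\gamma}=\frac{\alpha-2}{\gamma}$, and by \eqref{eq:termHq} the factor $\frac{\partial}{\partial w}\big(\frac{wH^\alpha}{|w|^{p(\alpha+1)/\gamma}}\big)$ equals $|w|^{-(\alpha-2)/\gamma}H^{\alpha-1}\big[(1-\tfrac{\alpha-2}{\gamma})(1-w^2)-2\alpha w^2\big]$, which is $\le 0$ on $(-1,1)$ exactly when the coefficient $1-\frac{\alpha-2}{\gamma}$ is $\le 0$, i.e. when $\alpha\ge 2+\gamma=1+q$; together with $|w|^pf^\gamma\ge 0$ and the positive prefactor $\frac{\gamma(1+\gamma)\beta}{\alpha+1+\gamma}$ this makes the whole drift contribution $\le 0$.

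What survives these reductions, after using the identity $|w|^pHf^{\gamma-1}(\partial_w f)^2=\tfrac{4}{(\gamma+1)^2}|w|^pH\big(\partial_w f^{(\gamma+1)/2}\big)^2$ recalled in the text, is the inequality
\[
\frac{d}{dt}y(t)\ \le\ A\,y(t)\ -\ \frac{4\lambda\gamma}{1+\gamma}\int_{-1}^1|w|^pH(w)\Big(\frac{\partial}{\partial w}g(w,t)\Big)^2 dw,\qquad g:=f^{(\gamma+1)/2},
\]
with $A:=-\lambda p^2-(1-\lambda)p+\gamma(1-2\lambda)$ and $y(t)=\int_{-1}^1|w|^pg^2(w,t)\,dw$. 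If $A\le 0$ this already yields $y(t)\le y(0)$ for every $t$ and we are done; if $A>0$, the crucial point is to bound the dissipation below by a superlinear power of $y$. Since $|w|^p\le 1$ on $(-1,1)$ one has $(1-|w|^{2-p})|w|^p=|w|^p-|w|^2\le|w|^p-|w|^{p+2}=(1-w^2)|w|^p$, hence $\int_{-1}^1|w|^pH(\partial_w g)^2\,dw\ge\int_{-1}^1(1-|w|^{2-p})|w|^p(\partial_w g)^2\,dw$, and the weighted Gagliardo--Nirenberg/Nash inequality of Lemma~\ref{lem:1bis}, applied to $g$, controls the latter from below by $c\,y^{1+\delta}\big(\int_{-1}^1 g\,dw\big)^{-\sigma}$ for suitable constants $c,\delta,\sigma>0$. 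Because $(\gamma+1)/2\le 1$, mass conservation together with Jensen's (or Hölder's) inequality gives $\int_{-1}^1 g\,dw=\int_{-1}^1 f^{(\gamma+1)/2}\,dw\le 2^{(1-\gamma)/2}\mu^{(\gamma+1)/2}$, a fixed constant. Substituting one obtains a closed inequality $\frac{d}{dt}y\le A\,y-C\,y^{1+\delta}$ with $C>0$, from which $y(t)\le\max\{y(0),(A/C)^{1/\delta}\}$ for all $t>0$; this is exactly the asserted uniform bound on $\|f(\cdot,t)\|_{L^q_p}^q$, and hence on $\|f(\cdot,t)\|_{L^q_p}$.

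The step I expect to be the main obstacle is the use of Lemma~\ref{lem:1bis}: one must check that the weighted functional inequality with the doubly degenerate weight $(1-|w|^{2-p})|w|^p$, which vanishes both at $w=0$ and at $|w|=1$, indeed yields a genuinely superlinear power, i.e. an exponent $1+\delta$ with $\delta>0$, so that at large values of the norm the dissipation dominates the growth term $A\,y$. By contrast, verifying the sign conditions used to discard the two unwanted terms (each being $\le 0$, so dropping them only strengthens the upper bound) and tracking the constants through the substitution $g=f^{(\gamma+1)/2}$ is routine, in close analogy with Section~\ref{sec:regularity}. It is worth noting that the argument degenerates as $q\to 2$ (that is, $\gamma\to 1$), where the threshold $1+q$ tends to $3$, consistently with the condition $\alpha\ge 3$ of the preceding theorem.
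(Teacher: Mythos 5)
Your proposal is correct and follows essentially the same route as the paper: the same four-term decomposition of $\frac{d}{dt}\int_{-1}^1|w|^p f^{1+\gamma}\,dw$, discarding the two sign-definite terms (with the same verification that $p=\frac{\alpha-2}{\alpha+1}$ makes \eqref{eq:termHq} nonpositive precisely when $\alpha\ge 2+\gamma=1+q$), passing to $g=f^{(\gamma+1)/2}$, and invoking Lemma \ref{lem:1bis} via the comparison $(1-|w|^{2-p})|w|^p\le(1-w^2)|w|^p$. You in fact make explicit the final step that the paper only gestures at (the closed scalar inequality $\dot y\le Ay-Cy^{1+\delta}$ with $\int g\,dw$ controlled by mass conservation and H\"older), which is a welcome completion rather than a deviation.
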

 
\section*{Concluding remarks}
We studied the problem of regularity propagation in a kinetic model for consensus dynamics  characterized by a variable  diffusion coefficient which allows for condensation effects due to a superlinear drift . In particular, we showed the minimum degree of nonlinearity in the drift which is able to trigger the loss of classical $L^2$ regularity. To this end, we designed new Nash and Gagliardo-Nirenberg inequalities and we provided sufficient conditions to obtain blow-up of the solution in finite time. Thanks to suitable energy estimates, it is shown that the second order moment of the solution of the problem annihilates in finite time if the initial energy is sufficiently low. Equivalently, the temperature of the system attains zero for a sufficiently large initial mass. The obtained relationship between the sizes of the initial mass and diffusion is such that the large the diffusion the large should be the mass to produce loss of regularity. Finally, we showed that the solution of the problem is regular in a suitable weighted space if $\alpha>3$. To treat the case $\alpha\in (2,3]$ we considered a modified weighted space where the regularity is recovered. Future research will focus on this case together with the multidimensional case. 
\vfill\eject
%\appendix
\section{Appendix}\label{appendix}

In this Appendix we will collect the technical results which allow to extend the classical methods related to Fokker--Planck type equations with constant diffusion to the present case, characterized by a variable diffusion coefficient. Some of these results can be obtained by resorting to some recent studies which took motivations from the study of Fokker--Planck type equations for economic and social applications \cite{FPTT, FPTT1}. 

In the one-dimensional setting,  Nash inequality \cite{Nash} states that, for any function $\phi = \phi(w) \in L^1(\R)$ such that the distributional derivative $\phi'(w) \in L^2(\R)$, we get
\be\label{Nash}
\left( \int_\R |\phi(w)|^2 \, dw \right)^3 \le C \int_\R \left| \phi'(w)\right|^2 dw \left( \int_\R |\phi(w)| \, dw \right)^4,
\ee
for a given universal constant $C>0$. The optimal constant has been determined in \cite{CL} in any dimension of the physical space, by resorting to a tricky proof based on the sharp Poincar\'e inequality for radial functions, see also \cite{BDS}. As a matter of fact, Nash inequality is a key tool to control the boundedness in time of $L^p(\R)$ norms of the solution to Fokker--Planck equations in the presence of constant diffusion. Since the Fokker-Planck \eqref{eq:FP} has nonconstant diffusion coefficient $H(w)$, the classical Nash inequality cannot be directly applied. Likely, in a recent paper \cite{ToR} an improved one-dimensional version of Poincar\'e inequality has been shown to hold.
 For any given positive constant $R$ and for any differentiable function $\phi(w)$, $w \in D_R = (-R, R) $  the  one-dimensional Poincar\'e-type inequality with weight states that
 \be\label{p-Po}
\left \|  \phi(w) - \langle \phi\rangle \right\|_{L^2(D_R)}  \le \frac 1{\sqrt 2} \,\left \|   \sqrt{ R^2-w^2}\, \phi'(w)\right\|_{L^2(D_R)},
\ee
where
\[
\langle \phi \rangle= \frac 1{2R}\int_{D_R}\phi(v)\, dv
\]
is the average value of $\phi$  over the interval $D_R$, and the constant $1/2$ is sharp. We remark that for $R =1$, the weight function on the right-hand side of \fer{p-Po} coincides with the diffusion coefficient $H(w)$.\\
The validity of inequality \fer{p-Po} suggests a possible improvement of the Nash inequality in the bounded domain $D_1$ which allows to control the boundedness in time of $L^p(D_1)$ norms of the solution to the Fokker--Planck equation \fer{eq:FP} by methods similar to the ones valid for a constant coefficient of diffusion. The main result is given in the following:

\begin{lemma}\label{lem:1} Let  $\phi$ be an integrable function on the interval $D_1= (-1,1)$ such that the distributional derivative of $\phi$ is square integrable. Then the following Nash--type inequality with weight holds
\be\label{Nash1}
\left( \int_{-1}^1 |\phi(w)|^2 \, dw \right)^3 \le  C \int_{-1}^1 (1-w^2)\left| \phi'(w)\right|^2 dw \left( \int_{-1}^1 |\phi(w)| \, dw \right)^{4},
\ee
where
\be\label{co}
C= \frac {3^3}{2^{5}}.
\ee
\end{lemma}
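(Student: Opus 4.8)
\noindent\emph{Proof strategy.} The plan is to transport the classical Fourier proof of the one-dimensional Nash inequality \eqref{Nash} to the present bounded, degenerate-weight setting, with the sharp weighted Poincaré inequality \eqref{p-Po} playing the role of the Plancherel identity. I would introduce a free scale $R\in(0,1)$, split $D_1=D_R\cup(D_1\setminus D_R)$ with $D_R=(-R,R)$, and on the central interval write $\phi=(\phi-\langle\phi\rangle_R)+\langle\phi\rangle_R$, so that by orthogonality
\[
\int_{D_R}\phi^2\,dw = \big\|\phi-\langle\phi\rangle_R\big\|_{L^2(D_R)}^2 + 2R\,\langle\phi\rangle_R^2 .
\]
The oscillation term is estimated by \eqref{p-Po}, and since $R\le1$ gives $R^2-w^2\le1-w^2$ on $D_R$, it is bounded by $\tfrac12\int_{-1}^1(1-w^2)|\phi'|^2\,dw$; the mean term is bounded by the $L^1$ norm through $2R\langle\phi\rangle_R^2=\tfrac1{2R}\big(\int_{D_R}\phi\big)^2\le\tfrac1{2R}\big(\int_{-1}^1|\phi|\big)^2$. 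The choice of cut-off frequency in the flat proof is here replaced by the choice of $R$.

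The remaining contribution $\int_{D_1\setminus D_R}\phi^2$ must then be absorbed into the same two quantities with a coefficient that degenerates as $R\to1$; this is the technical core of the argument and is precisely where the vanishing of $1-w^2$ at $w=\pm1$ enters. Collecting the three bounds yields, for every admissible $R$, an estimate
\[
\int_{-1}^1\phi^2\,dw \le a(R)\Big(\int_{-1}^1|\phi|\,dw\Big)^2 + b(R)\int_{-1}^1(1-w^2)|\phi'|^2\,dw ,
\]
with $a(R)\uparrow\infty$ as $R\downarrow0$ and $b$ bounded, arranged so that minimising the right-hand side over $R$ restores the Nash homogeneity. As in the flat case the optimal $R$ is a fixed power of $\big(\int(1-w^2)|\phi'|^2\big)\big/\big(\int|\phi|\big)^2$; the minimum of $a(R)M^2+b(R)D$ is attained where its two summands stand in a fixed ratio and equals $3$ times a product of powers of $M=\int_{-1}^1|\phi|$ and $D=\int_{-1}^1(1-w^2)|\phi'|^2$, so that cubing it produces the factor $3^3$ in \eqref{co}, the remaining numerical constant $2^{-5}$ coming out of the sharp constant $1/2$ in \eqref{p-Po} together with the length $|D_1|=2$. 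One must also check that this optimal $R$ falls in $(0,1)$ in the only regime where the conclusion does not already follow from \eqref{p-Po} applied on all of $D_1$.

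The step I expect to be the main obstacle is the control of the tail $\int_{D_1\setminus D_R}\phi^2$. In the Euclidean Nash inequality the analogue is the low-frequency remainder, which is automatically small by Plancherel; here, instead, the weighted space $\{\phi:\ \int_{-1}^1(1-w^2)|\phi'|^2<\infty\}$ sits exactly at the critical scaling of a Hardy-type bound near $w=\pm1$ and does not embed into $L^\infty$ there, so the tail estimate has to be carried out carefully, exploiting the behaviour of $\phi$ near the endpoints and tracking the $R$-dependence precisely enough not to degrade \eqref{co}. A convenient bookkeeping device, which I would use throughout, is the substitution $w=\cos\theta$: putting $\psi(\theta)=\phi(\cos\theta)$, the three integrals become $\int_0^\pi\psi^2\sin\theta\,d\theta$, $\int_0^\pi(\psi')^2\sin\theta\,d\theta$ and $\int_0^\pi|\psi|\sin\theta\,d\theta$, which recasts \eqref{Nash1} as a weighted Nash inequality on $(0,\pi)$ for the weight $\sin\theta$ and makes the endpoint analysis more transparent.
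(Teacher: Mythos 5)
Your outline reproduces the paper's treatment of the central interval $D_R$ --- the weighted Poincar\'e inequality \fer{p-Po} for the oscillation, $L^1$-control of the mean, and a final optimisation over $R$ whose arithmetic does yield $3^3/2^5$ --- but it stops exactly where the real work begins. You correctly identify the tail $\int_{D_1\setminus D_R}\phi^2$ as the technical core and observe that the weighted Dirichlet space does not embed into $L^\infty$ near $w=\pm1$, yet you offer no mechanism for closing that estimate; you only assert that it can be ``absorbed'' with an $R$-dependent coefficient. This is a genuine gap, and not a harmless one: any absorption of the tail that feeds extra multiples of $\int(1-w^2)|\phi'|^2$ or of $\bigl(\int|\phi|\bigr)^2$ into the right-hand side changes your functions $a(R)$, $b(R)$ and destroys the constant $3^3/2^5$ you claim to recover, while a pointwise bound on $\phi$ near the endpoints is unavailable for exactly the reason you state.

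The paper's way out is an idea absent from your proposal: symmetrisation. One first proves the weighted P\'olya--Szeg\H{o}-type principle \fer{goal}, namely that $\int_{-1}^1(1-w^2)|\phi'|^2$ does not increase when $\phi$ is replaced by its symmetric decreasing rearrangement $\phi^*$; this is done via the substitution $w=\tanh y$, which turns the weighted Dirichlet integral on $(-1,1)$ into the \emph{unweighted} one on $\R$ so that the classical rearrangement (Faber--Krahn) inequality \fer{F-K} applies, combined with the fact that rearrangement commutes with composition by an increasing bijection. Since the $L^1$ and $L^2$ norms are rearrangement-invariant, one may then assume $\phi=\phi^*$ is even and decreasing in $|w|$, and the tail is disposed of with \emph{no} Dirichlet energy at all: $h=\phi\chi_{D_1\setminus D_R}$ satisfies $\|h\|_{\infty}\le\phi(R)\le\frac{1}{2R}\|\phi\|_{L^1(D_R)}$, whence $\|h\|_{L^2}^2\le\frac{1}{2R}\|\phi\|_{L^1(D_R)}\|h\|_{L^1}$, and the two $L^1$ contributions recombine into $\frac{1}{2R}\|\phi\|_{L^1}^2$. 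Only then does minimising $\frac12\left[R^2\int_{-1}^1(1-w^2)|\phi'|^2\,dw+R^{-1}\|\phi\|_{L^1}^2\right]$ over $R$ give \fer{co}. Your $w=\cos\theta$ substitution, by contrast, leaves all three integrals weighted by $\sin\theta$ and does not remove the endpoint difficulty; if a change of variables is to help, it should be the one that flattens the Dirichlet term.
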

\begin{proof}
We follow the idea of proof in \cite{CL}, by enlightening the main differences. For a Borel set $A$ with volume $|A|$, we define its spherically decreasing symmetric rearrangement $A^*$  by
\[
A^* =B(0,\rho),
\]
where $B(0,\rho)$ stands for the open ball with radius $\rho >0$ centered at the origin and $\rho$ is determined by the condition that $B(0,\rho)$ has volume $|A|$. 
Given  a measurable non-negative function $\phi=\phi(w)$, $w \in \R$, we define its spherically decreasing symmetric rearrangement $\phi^*$ by \cite{WM}
\be\label{ssr}
\phi^*(w) = \int_0^{+ \infty} [w\in B^*_t] \, dt
\ee
where $B_t=\{v:\phi(v) >t\}$, and the expression $[Condition]$ is equal to $1$ if \emph{Condition} is true and $0$ otherwise. Hence, the spherically symmetric rearrangment of $\phi$ is completely determined by the open balls $B_t^*$. 

Then, since for $p\ge 1$ $\| \phi\|_{L^p}= \| \phi^*\|_{L^p}$ \cite{WM}, provided
\be\label{goal}
\int_{-1}^1(1-w^2)\left| \frac{d \phi^*(w)}{dw}\right|^2 dw \le \int_{-1}^1(1-w^2)\left| \frac{d \phi(w)}{dw}\right|^2 dw,
\ee
 we may assume without loss of generality, while establishing inequality \fer{Nash1}, that $\phi =\phi^*$, and we can resort to  the same proof in \cite{CL}. To verify that inequality \fer{goal} holds, we remark that the integrals in \fer{goal} can be evaluated by substitution. Indeed, by setting for $x \in(-1,1)$
\[
f(x) = F(y); \quad dy = \frac 1{1-x^2}dx
\]
we obtain 
\be\label{id}
\int_{-1}^1 (1-x^2)|f'(x)|^2 \,dx = \int_\R |F'(y)|^2 \, dy 
\ee
where
\be\label{kkk}
F(y) = f(\gamma(y)); \quad \gamma(y)  = \tanh y = \frac{e^{y}-e^{-y}}{e^{y}+e^{-y}}.
\ee
Note that $\gamma(y)$ is a strictly increasing function that maps $\R$ onto $(-1,1)$.\\
Consequently, to verify inequality \fer{goal}, it is enough to prove that, for any given measurable non-negative function $f$ and a strictly increasing function $\theta$  mapping an interval $A\subseteq \R$ onto an interval $B \subseteq \R$ one has the identity
\be\label{kk}
f^*(\theta(y) ) = f^*\circ \theta(y) = (f\circ \theta)^*(y), \quad y \in A.
\ee
Indeed, when \fer{kk} holds true, on account of Faber--Krahn inequality \cite{Fab,Kra}
\be\label{F-K}
\int_\R \left| \frac{d F^*(y)}{dy}\right|^2 dy \le \int_\R\left| \frac{dF(y)}{dy}\right|^2 dy,
\ee
that implies, thanks to \fer{id}
\[
\int_{-1}^1(1-x^2)\left| \frac{d f^*(x)}{dx}\right|^2 dx \le \int_{-1}^1(1-x^2)\left| \frac{d f(x)}{dx}\right|^2 dx .
\]
To this end, as shown in Lemma 2.1 in \cite{WM},  we recall that  the following identity is valid
\be\label{lemm}
\{ x: f(x)>t\}^* = \{ x: f^*(x)>t\}.
\ee
From \eqref{lemm} we can show \fer{kk}. Indeed, since $y\to \theta(y)$ is a one-to-one mapping, it is invertible. 
Thus we get
\begin{equations}\nonumber
\{ y: f^*(\theta(y)) >t\} &= \{ y=\theta^{-1}(z): f^*(z) >t\} = \{ y=\theta^{-1}(z): f(z) >t\}^*\\
&= \{ y: f(\theta(y)) >t\}^* = \{ y: (f\circ \theta)^* (y))>t\}.
\end{equations}
Since \fer{kk} holds true, from now on, let us assume $\phi = \phi^*$. Next, let us fix any $r>0$, and let $R = r/(1+r)$ so that $0\le R < 1$. Let us define
\[
g(w) = \phi(w) \chi_{D_R}(w ), \quad h(w) = \phi(w) -g(w), 
\]
being $\chi_A(\cdot)$ the indicator function of the set $A \subseteq \mathbb R$ and $D_R = (-R,R)\subseteq(-1,1)$. 
Then we get
\[
\| \phi\|^2_{L^2} = \| g\|^2_{L^2}+ \| h\|^2_{L^2}.
\]
Since $\phi$ is radially decreasing, the mean value theorem shows that for any $w \in \mathbb R$
\[
h(w) \le \phi(R) \le \frac 1{2R} \|g\|_{L^1(D_R)}.
\]
Next, let $\langle g\rangle$ the average value of $g$ over the interval $D_R$, so that
\[
\langle g \rangle = \frac 1{2R} \|g\|_{L^1(D_R)}.
\]
By the Poincar\'e inequality \fer{p-Po} on the interval $D_R$ we have
\begin{equations}\label{prova}
 \| g\|^2_{L^2(D_R)} =& \int_{-R}^R \left(g(w) - \langle g\rangle \right)^2\,dw  + \int_{-R}^R \langle g\rangle^2\,dw \\
& \le \frac 12 \int_{-R}^R (R^2 -w^2) |\phi'(w)|^2 \, dw + \frac 1{2R} \| g\|^2_{L^1(D_R)} \\
 &=  \frac{R^2}2 \int_{-R}^R \left(1 -\frac{w^2}{R^2}\right) |\phi'(w)|^2 \, dw + \frac 1{2R} \| g\|^2_{L^1(D_R)} \\
  &\le \frac{R^2}2 \int_{-1}^1 \left(1 -{w^2}\right) |\phi'(w)|^2 \, dw + \frac 1{2R} \| g\|^2_{L^1(D_R)} .
\end{equations}
Then, since $\| \phi\|^2_{L^2(D_R)} = \| g\|_{L^2(D_R)}^2$ and 
\[
%\| \phi\|^2_{L^1}\ge  (\| g\|_{L^1}+ \| h\|_{L^1})\| g\|_{L^1}
\| \phi\|_{L^1(D_R)}^2 \ge \|g \|_{L^1(D_R)}^2
\]
we conclude with the inequality
\[
\| \phi\|^2_{L^2} \le \frac 12\left[ {R^2} \int_{-1}^1 \left(1 -{w^2}\right)|\phi'(w)|^2 \, dw + \frac 1{R} \| \phi\|^2_{L^1}\right].
\]
Minimizing over $R$ we obtain inequality \fer{Nash1}.
\end{proof}
The result of Lemma \ref{lem:1} can be easily generalized to cover a class of weight functions different from $1-w^2$. We prove
\begin{lemma}\label{lem:1bis} Let $\phi$ be an integrable function on the interval $D_1= (-1,1)$ such that the distributional derivative of $\phi$ is square integrable. Then, for any given $0<p<1$, the following Nash--type inequality with weight holds
\be\label{Nash2bis}
\left(\int_{-1}^1 |\phi(w)|^4 \, dw\right)^3  \le  C_p\,\int_{-1}^1 (1-|w|^{2-p})|w|^p\left| \phi'(w)\right|^2 dw \left( \int_{-1}^1 |\phi(w)| \, dw \right)^4,
\ee
where the constant $C_p$ is defined as follows
\[
C_p =\dfrac{3^3}{2^4(2-p)(1-p)}.
\] 
\end{lemma}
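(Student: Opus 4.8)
The plan is to mimic the structure of the proof of Lemma~\ref{lem:1}, replacing the Poincar\'e inequality \fer{p-Po} and the constant weight $1$ by a version adapted to the degenerate weight $|w|^p$. First I would invoke a weighted Poincar\'e-type inequality on the interval $D_R = (-R,R)$ of the form
\[
\int_{-R}^R \bigl(\psi(w)-\langle\psi\rangle\bigr)^2\,dw \le \frac{1}{(2-p)(1-p)}\int_{-R}^R (R^{2-p}-|w|^{2-p})|w|^p\,|\psi'(w)|^2\,dw,
\]
which is precisely the kind of weighted Poincar\'e inequality available from \cite{FPTT,FPTT1,ToR}: the weight $(R^{2-p}-|w|^{2-p})|w|^p$ is the natural one attached to the measure $|w|^{-p}\,dw$ on $D_R$, and for $p=0$ it reduces to \fer{p-Po}. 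The constant $1/[(2-p)(1-p)]$ is exactly what appears in $C_p$, so I would keep track of it carefully; most likely it is obtained, as in \cite{CL,ToR}, by reducing to monotone rearrangements and optimizing a one-dimensional Hardy-type estimate. Note that because the left-hand side of \fer{Nash2bis} involves $\|\phi\|_{L^4}$ rather than $\|\phi\|_{L^2}$, I would apply this to $\psi = \phi^2$ (or track the $L^4$ norm directly), which explains the appearance of the fourth power on the left and keeps the $L^1$ norm of $\phi$ on the right.

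The second ingredient is a rearrangement step analogous to \fer{goal}: one must check that passing to the spherically decreasing rearrangement $\phi^*$ does not increase the weighted Dirichlet-type energy $\int_{-1}^1 (1-|w|^{2-p})|w|^p|\phi'(w)|^2\,dw$. Following the substitution idea around \fer{id}--\fer{kkk}, I would look for a change of variables $w = \gamma_p(y)$, with $dy = dw/[(1-|w|^{2-p})|w|^{p/2}\cdot(\text{something})]$ tuned so that the weighted integral becomes a flat Dirichlet integral $\int_\R |F'(y)|^2\,dy$ on which the Faber--Krahn inequality \fer{F-K} applies; then the rearrangement identity \fer{kk} (already proved in the excerpt) transfers the inequality back. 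Here the new weight is not smooth at $w=0$, so I would either work on $(0,1)$ and $(-1,0)$ separately using symmetry of $\phi^*$, or absorb the singularity into the change of variables; in any case the mechanism is identical to Lemma~\ref{lem:1}.

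Having these two pieces, the rest is the splitting argument verbatim from Lemma~\ref{lem:1}: fix $r>0$, set $R = r/(1+r)\in[0,1)$, write $g=\phi\chi_{D_R}$, $h=\phi-g$, use monotonicity of $\phi=\phi^*$ to bound $h(w)\le\phi(R)\le \langle g\rangle_{|w|^{-p}}$ on $D_R$ in the weighted average sense, apply the weighted Poincar\'e inequality to $g$ (after squaring, to $g^2$), use $R^{2-p}-|w|^{2-p}\le 1-|w|^{2-p}$ on $D_R\subset D_1$ to replace the domain $D_R$ by $D_1$ in the gradient term, recombine $\|g\|_{L^2(D_R)}^2 = \|\phi\|_{L^2(D_R)}^2$ and $\|g\|_{L^1}\le\|\phi\|_{L^1}$, and finally minimize the resulting expression of the shape $A R^{2-p}+ B R^{-(p+\text{const})}$ over $R>0$ to extract the product structure and the explicit constant $C_p = 3^3/[2^4(2-p)(1-p)]$. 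The main obstacle I anticipate is the first step: establishing the correct weighted Poincar\'e inequality on $D_R$ with sharp constant $1/[(2-p)(1-p)]$ and the right homogeneity in $R$, since the degeneracy of $|w|^p$ at the origin (combined with the vanishing of $R^{2-p}-|w|^{2-p}$ at the endpoints) makes the rearrangement/Hardy computation more delicate than in the $p=0$ case; once that is in hand, the scaling bookkeeping and the minimization over $R$ are routine.
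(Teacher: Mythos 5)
Your plan matches the paper's proof in all essentials: the authors establish exactly the weighted Poincar\'e inequality you postulate, with the constant $1/[(2-p)(1-p)]$, by applying Theorem 2.1 of \cite{FPTT1} to a uniform density on $(-R,R)$ with the explicit choice $P(w)=R^{2-p}-|w|^{2-p}$, $Q(w)=(2-p)|w|^{-p}w$ (so the ``main obstacle'' you flag is resolved by a direct $P,Q$ construction rather than a rearrangement/Hardy computation), and they then handle the rearrangement step via the substitution $dy = dx/[(1-x^{2-p})x^p]$ on $(0,1)$ and $(-1,0)$ separately, followed by Faber--Krahn and the identity \fer{kk}, before repeating the splitting and minimization of Lemma~\ref{lem:1} verbatim. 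The only minor discrepancy is your detour through $\psi=\phi^2$ to explain the fourth power on the left of \fer{Nash2bis}: the paper's proof simply repeats Lemma~\ref{lem:1} and therefore yields the $L^2$ version of the left-hand side (which is also what is used in Section~\ref{sect:4}), so that step of your plan is not needed and, as written, would not reproduce the stated right-hand side anyway.
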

\begin{proof}
We recall that, as recently proven in \cite{FPTT1}, Theorem 2.1, if $X$ is a random variable distributed with density $f_\infty(w)$, $w\in (i_-,i_+)$, satisfying the equality
\be\label{eqFPTT}
\frac d{dw}(P(w) f_\infty(x)) + Q(w) f_\infty(w) = 0,
\ee
where $P(w), Q(w)$ are $C^1$  functions such that $P(w)\ge 0$, $Q'(w) >0$ and 
\[
\lim_{w\to i_-}Q(w) <0, \quad \lim_{w\to i_+} Q(w)>0,
\]
then, for any smooth function $\phi$ such that $\phi(X)$ has finite variance, the following Poincar\'e-type inequality with weight holds
\be\label{var}
Var[\phi(X)] \le E\left( \frac{P(X)}{Q'(X)}[\phi'(X)]^2\right).
\ee
As in Lemma \ref{lem:1}, let us fix any $r>0$, and let $R = r/(1+r)$ so that $0\le R < 1$. Then, let $X$ be a random variable uniformly distributed on the interval $(-R,R)$, so that $f_\infty (w)= 1/(2R)$. If we choose, for a given $0<p<1$ and $w \in(-R,R)$ 
\[
P(w) = R^{2-p} - |w|^{2-p}, \quad Q(x) = (2-p) |w|^{-p} w,
\]
the pair $P,Q$ satisfies all the conditions listed above. Then, since
\[
Q'(w) = (2-p)(1-p) |w|^{-p} >0
\]
inequality \fer{var} holds, taking the form
\be\label{p-gen}
\left \|  \phi(w) - \langle \phi\rangle \right\|_{L^2(D_R)}^2  \le \frac 1{(2-p)(1-p)} \,\left \|   \sqrt{(R^{2-p} - |w|^{2-p})|w|^p }\, \phi'(w)\right\|_{L^2(D_R)}^2,
\ee
where $D_R =(-R,R)$, and
\[
\langle \phi \rangle= \frac 1{2R}\int_{D_R}\phi(v)\, dv
\]
is the average value of $\phi$  over the interval $D_R$. Note that, letting $p\to 0$, we recover inequality \fer{p-Po}. Also, the coefficient in \fer{p-gen} blows up as $p\to 1$.\\
From now on, we can repeat the proof of lemma \ref{lem:1} to conclude. The only difference is concerned with the application of Faber-Krahn inequality \fer{F-K}, that in this case would by given by 
\[
\int_{-1}^1(1-|x|^{2-p})|x|^p\left| \frac{d f^*(x)}{dx}\right|^2 dx \le \int_{-1}^1(1-|x|^{2-p})|x|^p\left| \frac{d f(x)}{dx}\right|^2 dx .
\]
A simple way to re-apply the proof of this point, as given in Lemma \ref{lem:1}, is to evaluate the integrals splitting the domain  $(-1,1)$ in the two intervals $(-1,0)$ and $(0,1)$. Clearly, if $x \in (0,1)$, since
\[
\frac 1{(1-x^{2-p})x^p} = \frac 1x + \frac{x^{-p} x}{1-x^{2-p}} = \frac d{dx} \log\frac x{(1-x^{2-p})^{1/(2-p)}},
\]
 the substitution 
\[
f(x) = F(y); \quad dy = \frac 1{(1-x^{2-p})x^p}dx
\]
leads to
\[
x(y) = \dfrac{ e^{y-y_o}}{\left( 1+ e^{(2-p)(y-y_0)}\right)^{1/(2-p)}},
\]
namely to a function that is strictly increasing for $y\in \R$ so that the identity \fer{kk} holds. Analogous result in the interval $(-1,0)$. \\ This concludes the proof.
\end{proof}

Another important consequence of Lemma \ref{lem:1} is that it can be used to obtain a certain class of Gagliardo--Nirenberg inequalities with weight, which are useful to control the regularity of the solution to equation \fer{eq:FP}. The following holds

\begin{lemma}\label{lem:2} Let  $\phi$ be an integrable function on the interval $D_1= (-1,1)$ such that the distributional derivative of $\phi$ is square integrable. Then the following Nash--type inequality with weight holds
\be\label{Nash2}
\int_{-1}^1 |\phi(w)|^2 \, dw  \le  D \int_{-1}^1 (1-w^2)\left| \phi'(w)\right|^2 dw \left( \int_{-1}^1 |\phi(w)| \, dw \right)^{2},
\ee
where
\[
D=\dfrac{\sqrt{2}}{2} \left( \dfrac{5}{3}\right)^{5/2}\left(\frac 32\right)^{3}.
\]
\end{lemma}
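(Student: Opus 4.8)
The plan is to obtain \eqref{Nash2} as a weighted Gagliardo--Nirenberg inequality, deduced from the weighted Nash inequality of Lemma~\ref{lem:1} together with the moment bound \eqref{b5}. To fix notation, write $A=\int_{-1}^1\phi^2\,dw$, $J=\int_{-1}^1(1-w^2)|\phi'|^2\,dw$ and $N=\int_{-1}^1|\phi|\,dw$, so that Lemma~\ref{lem:1} reads $A^3\le C\,J\,N^4$ with $C=3^3/2^5$, equivalently $A\le C^{1/3}J^{1/3}N^{4/3}$, while the target \eqref{Nash2} is the statement $A\le D\,J\,N^2$. The first point I would record is that these two have different scalings: under $\phi\mapsto t\phi$ the claimed inequality multiplies its left-hand side by $t^2$ and its right-hand side by $t^4$. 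Hence Lemma~\ref{lem:1} cannot by itself yield \eqref{Nash2}, and the missing ingredient must encode the boundedness of the interval $D_1=(-1,1)$.

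That ingredient is \eqref{b5}. Since $|w|\le1$ on $D_1$ we have $\int_{-1}^1 w^2|\phi|\,dw\le N$, and inserting this bound into \eqref{b5} gives $N\le 5(2\sqrt2)^{-4/5}A^{2/5}N^{1/5}$, that is, the reverse control
\[
N^2\le\frac{5^{5/2}}{8}\,A .
\]
I would first establish this reverse bound and then combine it with Lemma~\ref{lem:1}. Conceptually, once $JN$ is large the homogeneous estimate $A\le C^{1/3}J^{1/3}N^{4/3}$ is itself dominated by $D\,J\,N^2$, which yields \eqref{Nash2}; the reverse bound $A\ge\frac{8}{5^{5/2}}N^2$ is what ties this regime to the size of $\phi$. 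To pin the precise constant I would instead carry the reverse bound through the optimisation in the proof of Lemma~\ref{lem:1}: writing that lemma as $A^{3/2}\le C^{1/2}J^{1/2}N^2$ and optimising the single splitting parameter, now evaluated at the value dictated by the moment bound rather than at the unconstrained minimiser, collapses the surplus powers of $J$ and $N$ to the exponents of \eqref{Nash2}. A consistency check on the bookkeeping is that the stated constant factors as $D=\tfrac{5^{5/2}}{6}\,C^{1/2}$, so the factor $5^{5/2}$ is inherited from \eqref{b5} and the $C^{1/2}$ from Lemma~\ref{lem:1}, exactly as this combination predicts.

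The step I expect to be the main obstacle is precisely this homogeneity mismatch. Because \eqref{Nash2} is not scale invariant it can be sharp only in a definite regime --- the analogue of the condition $R^\ast\in(0,1)$ under which the clean homogeneous form of Lemma~\ref{lem:1} itself holds --- and the delicate point is to verify that the reverse bound coming from \eqref{b5} places $\phi$ in exactly that regime, namely the regime of large $\int_{-1}^1\phi^2\,dw$ in which the inequality is later applied to the solution of \eqref{eq:FP}. Once the regime is isolated the remainder is routine: no fresh rearrangement is required, since one builds directly on Lemma~\ref{lem:1}, and it only remains to track the optimisation constants through the combination to reach the displayed value of $D$.
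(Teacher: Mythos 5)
Your route is genuinely different from the paper's, and it does not close. The paper never uses \fer{b5} in this proof. Instead it constructs a new weighted Poincar\'e-type inequality for $\phi^4$: starting from the representation $\phi^2(w)-\phi^2(0)=2\int_0^1\phi'(wt)\phi(wt)\,w\,dt$, it applies Cauchy--Schwarz, integrates by parts against the weight $R^{3/2}-|w|^{3/2}$ (the argument of Theorem 18 in \cite{FPTT}) to reach \fer{cher2}, then reruns the splitting-and-optimisation scheme of Lemma \ref{lem:1} applied to $\phi^2$ to obtain \fer{nn}, namely a bound of $\left(\int\phi^4\right)^2$ by $\int(1-w^2)|\phi'|^2\left(\int\phi^2\right)^3$, and only at the very last step inserts \fer{Nash1} to convert $\left(\int\phi^2\right)^3$ into the weighted Dirichlet form times $\|\phi\|_{L^1}^4$. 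Note that what is actually being controlled is the $L^4$ norm of $\phi$ --- the $\int|\phi|^2$ on the left of \fer{Nash2} is evidently a misprint for $\int|\phi|^4$, as the subsequent interpolation \fer{inter}--\fer{GN} requires --- so an argument aimed at the literal $L^2$ statement misses the content of the lemma in any case.

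The concrete gap in your argument is the combination step. Your observation of the homogeneity mismatch is correct, and the reverse bound $N^2\le\tfrac{5^{5/2}}{8}A$ (with $A=\int\phi^2$, $N=\int|\phi|$) is valid --- indeed Cauchy--Schwarz on $(-1,1)$ gives the sharper $N^2\le 2A$ directly. But feeding it into $A^3\le C\,J\,N^4$ yields $A\le\tfrac{5^5C}{64}\,J$, or at best $A^2\le\tfrac{5^{5/2}C}{8}\,J\,N^2$; neither is the target $A\le D\,J\,N^2$. The surplus power of $A$ cannot be removed without a lower bound on $N$ or a normalisation $A\ge \mathrm{const}$, neither of which is among the hypotheses; a \emph{lower} bound on $A$ (which is what the reverse inequality provides) cannot help establish an \emph{upper} bound on $A$. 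In the complementary regime where $J N$ is small --- e.g.\ $\phi$ close to a constant --- you have no argument at all. Finally, the factorisation $D=\tfrac{5^{5/2}}{6}\,C^{1/2}$ is numerically correct, but it is offered as a consistency check rather than derived: the optimisation ``at the value dictated by the moment bound'' is never carried out, and it is exactly there that the proof would have to, and cannot, close.
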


\begin{proof} 
For any given positive constant $\rho$ let $R= \rho(1+\rho)$, so that $0\le R < 1$, and let $w \in D_R$.  Let $r(w, t)=wt$ so that $r(w,0) =0$ and $r(w,1) = w$. Let $\dot r(w,t)$ denote the partial derivative of $r$ with respect to $t$. Thanks to the gradient theorem, for any given (smooth) function $\phi(w)$ we have
\[
%\label{grad}
\phi^2(w) - \phi^2(0) = \int_0^1 [\phi^2(r(w,t))]' \dot r(w,t) dt = 2 \int_0^1 \phi'(wt) \phi(wt) w \,dt.
\]
so that, by Cauchy--Schwarz inequality
 \be\label{ook}
 \left| \phi^2(w) - \phi^2(0)\right|  \le  2 \left( \int_0^1 |\phi'(wt)|^2 |w|^{3/2}\, dt\right)^{1/2} \left( \int_0^1 |\phi(wt)|^2 |w|^{1/2}\, dt\right)^{1/2}. 
 \ee
 Let us choose $\phi \in L^4(D_1)$. Then, since for $w \not= 0$
 \[
 \int_0^1 |\phi(wt)|^2 \, dt = \frac 1w \int_0^w \phi^2(v) \, dv,
 \]
 the Cauchy-Schwarz inequality implies
 \begin{equations}\label{quarto}
|w|^{1/2} \left| \int_0^1 |\phi(wt)|^2  \, dt \right| &=  |w|^{-1/2} \left| \int_0^w \phi^2(v) \, dv\right| \\
&|w|^{-1/2} \left( \int_0^w \phi^4(v) \, dv\right)^{1/2} \left( \int_0^w 1 \, dv\right)^{1/2} \le  \left( \int_{-R}^R \phi^4(v) \, dv\right)^{1/2}.
 \end{equations}
On the other hand, we may observe that, once we define 
\[
\langle \phi^2 \rangle= \frac 1{2R}\int_{D_R}\phi^2(v)\, dv,
\]
the following property of the variance does hold
 \[
\int_{-R}^R  \phi^4(w)\frac 1{2R} \ dw -   \langle \phi^2\rangle^2  \leq \int_{-R}^R (\phi^2(w)-\phi^2(0))^2 \frac 1{2R}\ dw. 
\] 
Hence, inequalities  \fer{ook} and \fer{quarto} imply
\be\label{cher}
 \int_{-R}^R  \phi^4(w)\frac 1{2R} \ dw -   \langle \phi^2\rangle^2  \leq 2\left( \int_{-R}^R \phi^4(v) \, dv\right)^{1/2}  \int_{-R}^R  \frac 1{2R}\int_0^1 |\phi'(wt)|^2 |w|^{3/2}\, dt\, dw.
 \ee
At this point we use the argument in Theorem 18 of \cite{FPTT}.   Let $w \in (-R,R)$, and let us set $\kappa(w) = R^{3/2} - |w|^{3/2}$. Thanks to the identity 
\[
\frac d{dw}(R^{3/2} - |w|^{3/2}) = - \frac 32 \frac w{|w|^{1/2}} 
\]
which implies 
\[
|w|^{3/2} = -\frac 23 w \frac {d\kappa(w)}{dw},
\]
we can write
    \begin{equations}\label{par}
  & \int_{-R}^R |w|^{3/2} \frac 1{2R}  \int_0^1 | \phi'(wt)|^2 \ {dt} \ dw = 
  - \frac 23  \int_{-R}^R  \frac 1{2R} w \frac{d\kappa(w) }{d w}  \int_0^1 [ \phi'(wt)]^2 \ {dt} \, dw   \\
  &\leq\frac 23 \int_{-R}^R \kappa(w)  \frac1{2R} \frac{\partial}{\partial w} \left[ w \int_0^1 | \phi'(wt)|^2 \, {dt}\right] \, dw.
 \end{equations}
 In \fer{par}, we used  integration by parts to get the last line. Indeed here the border term satisfies
 \be\label{ss}
-  \left. w\kappa(w)  \int_0^1 [ \phi'(wt]^2 \ {dt} \right|_{-R}^{+R} \le 0.
 \ee
 Next, since for any given function $\psi(wt)$ one has the identity
 \[
 \frac \partial{\partial t} \left( t \, \psi(wt )\right) = \frac{\partial}{\partial w} \left[ w \,\psi(wt)\right],
 \]
it holds 
 \[
\frac{\partial}{\partial w} \left[ w \int_0^1 | \phi'(wt|^2 \, {dt}\right] = \int_0^1  \frac d{dt} \left( t |\phi'(wt)|^2 \right) \, dt = 
|\phi'(w)|^2.
 \]
 Substituting into \fer{par} gives 
 \be\label{3k}
  \int_{-R}^R  \frac 1{2R}\int_0^1 [\phi'(wt)]^2 |w|^{3/2}\, dt\, dw \le \frac 1{3R} \int_{-R}^R (R^{3/2} - |w|^{3/2})[\phi'(w)]^2\, dw.
\ee
Now, considering that for $|w| \le R$
\[
R^{3/2} - |w|^{3/2} = \frac{R^{3/2} - |w|^{3/2} }{R^{2} - |w|^{2} }(R^{2} - |w|^{2} ) \le \frac 1{\sqrt R} (R^{2} - |w|^{2} ) ,
\]
we finally obtain from \fer{cher} the inequality
\be\label{cher2}
 \int_{-R}^R  \phi^4(w)\frac 1{2R} \ dw -   \langle \phi^2\rangle^2  \leq \frac 2{3 R^{3/2}} \left( \int_{-R}^R \phi^4(v) \, dv\right)^{1/2}
 \int_{-R}^R (R^{2} - |w|^{2})|\phi'(w)|^2\, dw.
\ee
Let us now apply the proof of Lemma \ref{lem:1} to the function $\phi^2$. The Poincar\'e inequality leading to \fer{prova} is here substituted by the Poincar\'e type inequality \fer{cher2} to give
\[
\| \phi\|^4_{L^4} \le { \frac 43 R^{3/2}}\| \phi\|^2_{L^4} \int_{-1}^1 \left(1 -{w^2}\right) |\phi'(w)|^2 \, dw + \frac 1{2R} \| \phi^2 \|^2_{L^1}.
\]
%\textcolor{red}{
%Qua mi viene
%\[
%R^* = \left(\dfrac{B}{4A}\right)^{2/5}. 
%\]
%e quindi
%\[
%\left( \int_{-1}^1 |\phi(w)|^2 \, dw \right)^2 \le C \left(\int_{-1}^1 (1-w^2)\left| \phi'(w)\right|^2 dw\right) \left( \int_{-1}^1 |\phi(w)|^2 \, dw \right)^3.
%\]
%con $C = \frac{5}{4}\sqrt{5}$
%}
Minimizing over $R$ we obtain
\be\label{nn}
\left( \int_{-1}^1 |\phi(w)|^4 \, dw \right)^2 \le \dfrac{\sqrt{2}}{2} \left( \dfrac{5}{3}\right)^{5/2}\int_{-1}^1 (1-w^2)\left| \phi'(w)\right|^2 dw \left( \int_{-1}^1 |\phi(w)|^2 \, dw \right)^3.
\ee
Using now the upper bound given by Nash inequality \fer{Nash1} on the last integral in \fer{nn} we finally get inequality \fer{Nash2}.
\end{proof}

We may observe that the result of Lemma \ref{lem:2} can be considered to obtain a larger class of weighted new Gagliardo-Nirenberg inequalities. Indeed, for $2 < p <4$, from the classical interpolation inequality 
\be\label{inter}
\int_1^1 |\phi(w)|^p \, dw \le \left(\int_1^1 |\phi(w)|^4 \, dw \right)^{(p-1)/3}\left(\int_1^1 |\phi(w)|\, dw \right)^{(4- p)/3}
\ee
coupled with the Nash-type inequality with weight \fer{Nash2} states the Gagliardo-Nirenberg inequality with weight
\be\label{GN}
\int_{-1}^1 |\phi(w)|^p \, dw  \le  C_{GN}\,\left( \int_{-1}^1 (1-w^2)\left| \phi'(w)\right|^2 dw\right)^{(p-1)/3}  \left( \int_{-1}^1 |\phi(w)| \, dw \right)^{(p+2)/3},
\ee
where the positive constant $C_{GN} $ is explicitly given in terms of the constants $C$ of Lemma \ref{lem:1} and the 
constant $D$ of Lemma \ref{lem:2}
\be\label{co2}
C_{GN} = (C\cdot D)^{(p -1)/3}.
\ee

\begin{remark} The previous proofs are closely dependent from the weighted Poincar\'e-type inequality \fer{p-Po}, and from the monotonicity properties of the function $H$. This suggests that similar inequalities could be derived also in other cases, where  weighted Poincar\'e inequalities, analogous to \fer{p-Po},  have been proven to hold with a weight function sharing the same properties. The possibility to get these new inequalities is presently under study.
\end{remark}
}
\section*{Acknowledgements}
Both the authors are member of  GNFM (Gruppo Nazionale di Fisica Matematica) of INdAM, Italy. M.Z. acknowledge support of PRIN2022PNRR project No.P2022Z7ZAJ and of the European Union - NextGeneration EU.

\end{document}